\crefname{assumption}{Assumption}{Assumptions}
\crefname{method}{MNN}{Methods}
\crefname{section}{Section}{Sections}
\newcommand{\R}{\mathbb{R}}
\newcommand\restr[2]{\ensuremath{\left.#1\right\vert_{#2}}}
\newtheorem{remark}{Remark}
\newtheorem{assumption}{Assumption}
\newtheorem{example}{Example}
\newtheorem{method}{Method}%
\begin{document}
	\title{Modified Neumann--Neumann methods for semi- and quasilinear elliptic equations
\thanks{Version of  \today. \funding{This work was supported by the Swedish Research Council under the grant 2019--
05396.}}}

\author{Emil Engstr\"om\thanks{Centre for Mathematical Sciences, Lund University,
P.O.\ Box 118, SE-22100 Lund, Sweden, (\email{emil.engstrom@math.lth.se}).}
\and Eskil Hansen\thanks{Centre for Mathematical Sciences, Lund University,
P.O.\ Box 118, SE-22100 Lund, Sweden, (\email{eskil.hansen@math.lth.se}).}}
	
\maketitle

\begin{abstract}
The Neumann--Neumann method is a commonly employed domain decomposition method for linear elliptic equations. However, the method exhibits slow convergence when applied to semilinear equations and does not seem to converge at all for certain quasilinear equations. We therefore propose two modified Neumann--Neumann methods that have better convergence properties and require less computations. We provide numerical results that show the advantages of these methods when applied to both semilinear and quasilinear equations. We also prove linear convergence with mesh-independent error reduction under certain assumptions on the equation. The analysis is carried out on general Lipschitz domains and relies on the theory of nonlinear Steklov--Poincaré operators.
\end{abstract}
    
    \begin{keywords}
    Nonoverlapping domain decomposition, Neumann--Neumann method, linear convergence, semi- and quasilinear elliptic equations
    \end{keywords}

    \begin{MSCcodes}
        65N55, 35J61, 35J62, 47N20
    \end{MSCcodes}
  

\section{Introduction}  
We consider a bounded Lipschitz domain $\Omega\subset\R^d$ for $d=2, 3$, with boundary $\partial\Omega$, together with the quasilinear elliptic equation with homogeneous Dirichlet boundary conditions
\begin{equation}\label{eq:strong}
    \left\{
         \begin{aligned}
                -\nabla\cdot\alpha(x, u, \nabla u)+\beta(x, u, \nabla u)&=f  & &\text{in }\Omega,\\
                 u&=0 & &\text{on }\partial\Omega.
            \end{aligned}
    \right.
\end{equation}
We will refer to the equation as semilinear if $\alpha(x, y, z) = \tilde{\alpha}(x)z$. A common approach to generate numerical schemes for elliptic equations that can be implemented in parallel and only rely on local communication is to employ nonoverlapping domain decomposition methods. 

The performance and convergence properties of these methods have been extensively studied for linear elliptic equations, as surveyed in~\cite{quarteroni,Widlund}, but less is known for the methods applied to quasilinear elliptic equations. Convergence results for overlapping Schwarz methods have been derived in~\cite{dryja97,lions2,tai98,tai02}. Note that~\cite{tai02} even considers degenerate quasilinear equations, i.e., when $\alpha(x, y, z) = \tilde{\alpha}(x, y, z)z$ and $\tilde{\alpha}(x, y, z)=0$ for some $y\neq 0$. For nonoverlapping decompositions we are only aware of a few studies that derive rigorous convergence results. In~\cite{berninger11} the convergence of the Dirichlet--Neumann and Robin--Robin methods are analyzed for a one-dimensional quasilinear elliptic equation, and in our own paper~\cite{ehee22} we prove convergence without order for the Robin--Robin method applied to degenerate quasilinear equations with a $p$-structure.  Moreover, a modified Dirichlet--Neumann method for nonlinear equations is studied in~\cite{gander23}, and convergence is derived for one-dimensional problems. There are also some studies relating to quasilinear equations and decompositions with cross points~\cite{berninger14,gander20,gander23preprint,schreiber}, but without convergence results.
We are unaware of any linear convergence results for nonoverlapping decompositions applied to quasilinear, or even semilinear, elliptic problems on general Lipschitz domains.  

The basic nonoverlapping decompositions methods are the already mentioned Dirichlet--Neumann, Neumann--Neumann, and Robin--Robin methods. In the context of linear elliptic equations the Neumann--Neumann method has better convergence properties on non-symmetric domain decompositions compared to the Dirichlet--Neumann method. Furthermore, for linear equations and after a finite element discretization one can prove that the Neumann--Neumann method converges linearly with an error reduction constant $C$ that is uniformly bounded with respect to the mesh width $h$, whereas the Robin--Robin method obtains a deteriorating constant of the form $C=1-\mathcal{O}(\sqrt{h})$. With these considerations in mind, the Neumann--Neumann method is a natural starting point for developing linearly convergent methods for more general elliptic equations. 

In order to introduce the methods, we decompose the domain $\Omega$ into two subdomains $\Omega_1$ and $\Omega_2$ such that 
\begin{displaymath}
\overline{\Omega}=\overline{\Omega}_1\cup\overline{\Omega}_2,\quad \Omega_1\cap\Omega_2=\emptyset\quad\text{and}\quad\Gamma=(\partial\Omega_1\cap\partial\Omega_2)\setminus\partial\Omega,
\end{displaymath}
where $\partial\Omega_i$ denotes the boundary of $\Omega_i$ and $\Gamma$ is the interface separating the subdomains. The subdomains can also be replaced by two families of nonadjacent subdomains, which is required in order to obtain a parallel method. Two possible decompositions are illustrated in~\cref{fig:domain}.
\begin{figure}
\centering
\includegraphics[width=0.4\linewidth]{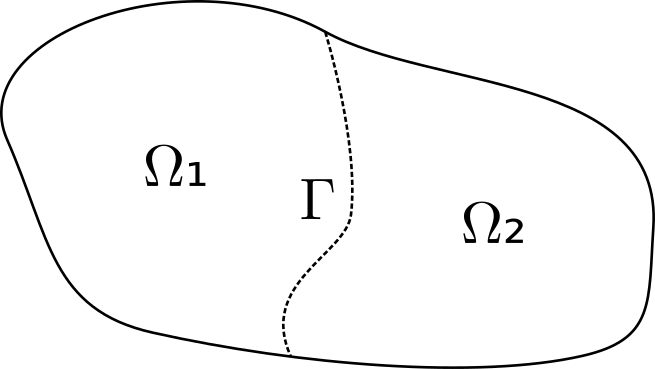}\qquad
\includegraphics[width=0.4\linewidth]{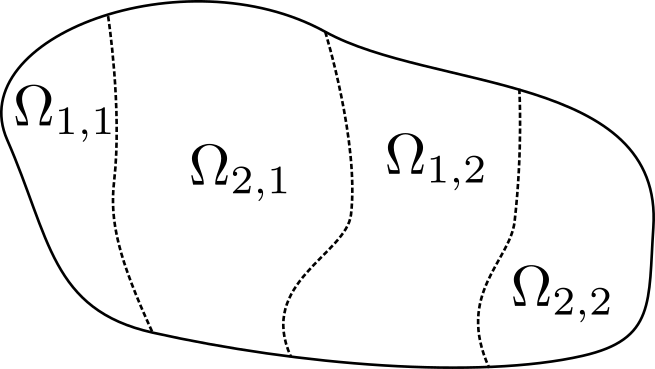}
\caption{\emph{An example of a domain decomposition with two subdomains $\Omega_1, \Omega_2$ (left), and one with a decomposition into two families of subdomains $\{\Omega_{1,\ell}\}$, $\{\Omega_{2,\ell}\}$ (right).}}
\label{fig:domain}
\end{figure}%

The Neumann--Neumann method for the nonlinear equation~\cref{eq:strong} is then as follows. Let $s_1,s_2>0$ be method parameters and $\eta^0$ an initial guess on $\Gamma$. For $n=0,1,2,\ldots,$ find $(u_1^{n+1}, u_2^{n+1}, w_1^{n+1}, w_2^{n+1})$ such that
    \begin{equation}\label{eq:nnstrong}
    	\left\{
    	\begin{aligned}
    		-\nabla\cdot\alpha(x, u^{n+1}_i, \nabla u^{n+1}_i)+\beta(x, u^{n+1}_i, \nabla u^{n+1}_i)&=f  & &\text{in }\Omega_i,\\
    		u^{n+1}_i&=0 & &\text{on }\partial\Omega_i\setminus\Gamma,\\
    		u^{n+1}_i &= \eta^n & &\text{on }\Gamma, \text{ for } i=1,2,\\[5pt]
    		-\nabla\cdot\alpha(x, w^{n+1}_i, \nabla w^{n+1}_i)+\beta(x, w^{n+1}_i, \nabla w^{n+1}_i)&=0  & &\text{in }\Omega_i,\\
    		w^{n+1}_i&=0 & &\text{on }\partial\Omega_i\setminus\Gamma,\\
    		\alpha(x, w^{n+1}_i, \nabla w^{n+1}_i)\cdot\nu_1 &=  && \\
            \alpha(x, u_1^{n+1}, \nabla u_1^{n+1})\cdot\nu_1-\alpha(x, u_2^{n+1}, \nabla  u_2^{n+1})&\cdot\nu_1 &&\text{on }\Gamma,\text{ for } i=1,2,
    	\end{aligned}
    	\right.
    \end{equation}
with $\eta^{n+1}=\eta^n-s_1\restr{w_1^{n+1}}{\Gamma}-s_2\restr{w_2^{n+1}}{\Gamma}$. Here, $\nu_1$ denote the outwards pointing unit normal on $\partial\Omega_1$, and the computed quantities $(u_1^n, u_2^n,\eta^n)$ approximate $(u|_{\Omega_1}, u|_{\Omega_2}, u|_{\Gamma})$, respectively. We will refer to the last three equalities in~\cref{eq:nnstrong} as the auxiliary problem. 

However, the Neumann--Neumann method requires the solution of four nonlinear problems in each iteration, which is inefficient even for semilinear elliptic equations. The method also displays surprisingly poor convergence properties for degenerate quasilinear equations. For example consider the semilinear equation 
\begin{equation}\label{eq:semi}
-\Delta u+|u|u=f
\end{equation}
and the $p$-Laplace equation
\begin{equation}\label{eq:3lap}
 -\nabla\cdot(|\nabla u|^{p-2}\nabla u)+u=f,
\end{equation}
both given on the unit square $\Omega=(0,1)^2$ with a decomposition into two $L$-shaped subdomains $\Omega_1$ and $\Omega_2$. The Neumann--Neumann method then requires a significantly larger amount of equation solves compared to other methods for the semilinear equation; see~\cref{fig:sub1}. Furthermore, it does not even generate a decreasing error with respect to the number of iterations for the $p$-Laplace equation; see~\cref{fig:sub2}. The details of the numerical experiments are given in~\cref{sec:numexp}.

In order to obtain methods that require less solutions of nonlinear equations, we propose two modified Neumann--Neumann methods based on replacing the auxiliary problem with a linear one, i.e., we employ a linear preconditioner. For the first modified method we replace the auxiliary problem by 
\begin{equation}\label{eq:mnn1strong}
    	\left\{
    	\begin{aligned}
    		-\Delta w^{n+1}_i&=0  & &\text{in }\Omega_i,\\
    		w^{n+1}_i&=0 & &\text{on }\partial\Omega_i\setminus\Gamma,\\
    		\nabla w^{n+1}_i\cdot\nu_1 &= \\
            \alpha(x, u_1^{n+1}, \nabla u_1^{n+1})\cdot\nu_1-\alpha(x, u_2^{n+1}, \nabla u_2^{n+1})&\cdot\nu_1 & &\text{on }\Gamma,\text{ for } i=1,2.
    	\end{aligned}
    	\right.
\end{equation}
We also consider a second method where the auxiliary problem is iteration-dependent, in particular via the linearization of the auxiliary problem. For notational simplicity, we assume that $\alpha(x, y, z)=\alpha(x, z)$ and $\beta(x, y, z)=\beta(x, y)$. We then introduce $J_\alpha$ and $J_\beta$, the Jacobians of $\alpha$ and $\beta$ with respect to $z$ and $y$, respectively. The second modified method is then given by replacing the auxiliary problem with
 \begin{equation}\label{eq:mnn2strong}
	\left\{
	\begin{aligned}
		-\nabla\cdot \bigl(J_\alpha(x, \nabla u_i^{n+1})\nabla w_i\bigr)+J_\beta(x, u_i^{n+1})w_i&=0  & &\text{in }\Omega_i,\\
		w^{n+1}_i&=0 & &\text{on }\partial\Omega_i\setminus\Gamma,\\
		J_\alpha(x, \nabla u^{n+1}_i)\nabla w^{n+1}_i\cdot\nu_1 &=  \\
        \alpha(x, \nabla u_1^{n+1})\cdot\nu_1-\alpha(x, \nabla u_2^{n+1})&\cdot\nu_1 & &\text{on }\Gamma,\text{ for } i=1,2.
	\end{aligned}
	\right.
\end{equation}

The efficiency gains of the modified Neumann--Neumann methods over the standard Neumann--Neumann method are illustrated in~\cref{fig:sub1} for our semilinear example. The modified methods even perform well for the degenerate $p$-Laplace equation; see~\cref{fig:sub2}. 
\begin{figure}
  \centering
\begin{subfigure}{.5\textwidth}
  \centering
  \includegraphics[width=1\linewidth]{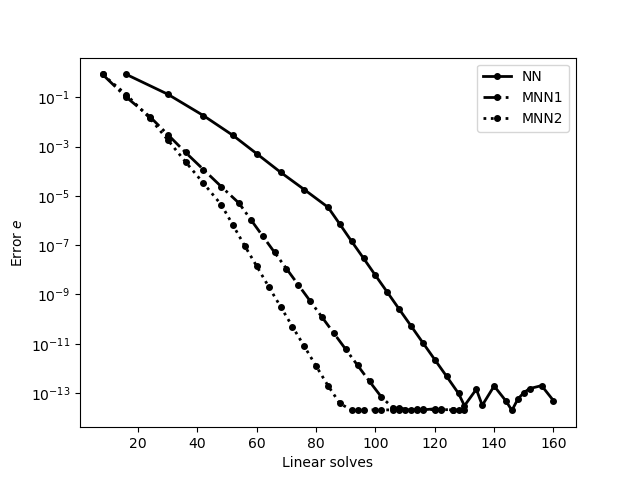}
  \caption{}
  \label{fig:sub1}
\end{subfigure}%
\begin{subfigure}{.5\textwidth}
  \centering
  \includegraphics[width=1\linewidth]{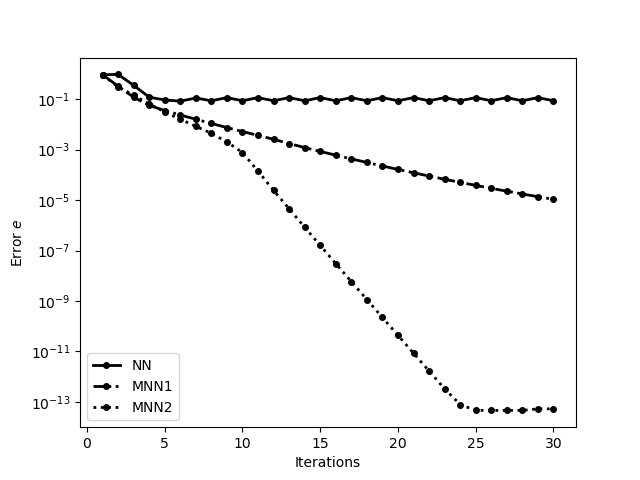}
  \caption{}
\label{fig:sub2}
\end{subfigure}%
\caption{\emph{Performance of the Neumann--Neumann method (NN) and its modifications (MNN1, MNN2). Figure (a) illustrates the errors compared to the total amount of linear solves, generated by Newton's method, at each iteration for the three methods when applied to the semilinear equation~\cref{eq:semi}. Here, NN requires roughly 50\% more solves, for small errors, compared to MNN2. Figure (b) shows the errors at each iteration for the three methods when applied to the $p$-Laplace equation~\cref{eq:3lap}. Note that the error for the NN approximation does not decrease after five iterations.}}
\label{fig:test}
\end{figure}%

Hence, there are two goals for this study. First, to provide numerical evidence of the advantages of the modified Neumann--Neumann methods for quasilinear equations, including degenerate ones. Secondly, to rigorously prove linear convergence for the modified methods when applied to semi- or quasilinear elliptic equations with domains and subdomains that are only assumed to be Lipschitz. 

The paper is organized as follows. In~\cref{sec:prel} we give the abstract framework to analyze nonlinear domain decomposition methods and state the assumptions on our equations. In~\cref{sec:weak} we then introduce the weak formulations and in~\cref{sec:tran} we define the transmission problem and the nonlinear Steklov--Poincaré operators. In~\cref{sec:methods} we describe the Steklov--Poincaré interpretations of the two modified Neumann--Neumann methods. We then prove convergence of the first modified method for nondegenerate quasilinear equations in~\cref{sec:conv1} and in~\cref{sec:conv2} we prove the convergence of the second modified method for semilinear equations. In~\cref{sec:discrete} we give a brief explanation how the same analysis can be applied to the Galerkin discretization of the elliptic equation. Finally, in~\cref{sec:numexp} we present some numerical results that verify our analysis.

To keep the notation simple we will denote generic constants by $c, C>0$, which may change from line to line.
    
\section{Preliminaries}\label{sec:prel}
Let $X$ be a Hilbert space and $X^*$ its dual. We use the notation $\langle\cdot, \cdot\rangle_{X^*\times X}$ for the dual pairing, or simply $\langle\cdot, \cdot\rangle$ if the spaces are obvious from context. A (nonlinear) operator $G: X\rightarrow X^*$ is said to be demicontinuous if $x^n\rightarrow x$ in $X$ implies that
    \begin{displaymath}
        \langle Gx^n-Gx, y\rangle\rightarrow 0\qquad\text{for all } y\in X
    \end{displaymath}
and $G$ is said to be coercive if
    \begin{displaymath}
        \lim_{\|x\|_X\rightarrow\infty}\frac{\langle Gx, x\rangle}{\|x\|_X}=\infty.
    \end{displaymath}
Moreover, the operator $G$ is uniformly monotone if there exists a $c>0$ such that
\begin{displaymath}
        \langle Gx-Gy, x-y\rangle\geq c\|x-y\|_X^2 \quad\text{ for all } x, y\in X.
\end{displaymath}
Note that a uniformly monotone operator $G$ is also coercive, since
\begin{displaymath}
\frac{\langle Gx, x\rangle}{\|x\|_X}=\frac{\langle Gx-G(0), x-0\rangle}{\|x-0\|_X}+\frac{\langle G(0), x\rangle}{\|x\|_X}\geq c\|x\|_X-\|G(0)\|_{X^*}.
\end{displaymath}

Finally, for a linear operator $P:X\rightarrow X^*$ we say that $P$ is symmetric if
    \begin{displaymath}
        \langle Px, y\rangle=\langle Py, x\rangle \quad\text{ for all } x, y\in X.
 \end{displaymath}
Also note that for a linear $P$ the uniform monotonicity is equivalent to
    \begin{displaymath}
        \langle Px, x\rangle\geq c\|x\|_X^2 \quad\text{ for all } x\in X.
 \end{displaymath}
For the analysis, we require the following geometric assumption.
\begin{assumption}\label{ass:domain}
        The domains $\Omega, \Omega_i\subset\R^d$ are bounded and Lipschitz. The interface $\Gamma$ and exterior boundaries $\partial\Omega\setminus\partial\Omega_i$ are $(d-1)$-dimensional Lipschitz manifolds.
    \end{assumption}
    We denote the standard Sobolev spaces by $H^1$ and $H^1_0$. We will also make use of the fractional Sobolev space $H^{1/2}$, see~\cite[Appendix A.2]{Widlund} for a proper definition. We denote our Sobolev spaces by
    \begin{align*}
        V&=H_0^1(\Omega),\quad V_i^0=H_0^1(\Omega_i),\\
        V_i&=\{u\in H^1(\Omega_i):\, T_iu=0\},\\
        \Lambda &=\{\mu\in L^2(\Gamma):\, E_i\mu\in H^{1/2}(\partial\Omega_i)\},
    \end{align*}
    with the usual norms for $V$ and $V_i^0$, and
    \begin{align*}
        \|u\|_{V_i}&=\|u\|_{L^2(\Omega_i)}+|u|_{V_i}=\|u\|_{L^2(\Omega_i)}+\|\nabla u\|_{L^2(\Omega_i)^d},\\
        \|\mu\|_{\Lambda} &=\|E_i\mu\|_{H^{1/2}(\partial\Omega_i)}.
    \end{align*} 

Here, $E_i: L^2(\Gamma)\rightarrow L^2(\partial\Omega_i)$ denotes the extension by zero and $T_i:V_i\rightarrow\Lambda$ denotes the trace operator on the interface $\Gamma\subset\partial\Omega_i$. The trace operator is a bounded linear operator~\cite[Lemma 4.4]{ehee22}. We will also make use of the fact that $T_i$ has a bounded linear right inverse, which we denote by $R_i:\Lambda\rightarrow V_i$. For more details, we refer to~\cite[Section 4]{ehee22}.

    We assume the following structure on the equation~\cref{eq:strong}.
    \begin{assumption}\label{ass:eq}
    The functions $x\mapsto \alpha(x, y, z)$ and $x\mapsto \beta(x, y, z)$ are measurable for almost all $y\in \R$ and $z\in \R^d$. Moreover, $\alpha$ and $\beta$ satisfy the following conditions, where $h_\ell$ are nonnegative functions in $L^\infty(\Omega)$.
    \begin{itemize}
        \item The function $\alpha$ is Lipschitz continuous in the variables $y$ and $z$. That is,
        \begin{displaymath}
        	|\alpha(x, y, z)-\alpha(x, y', z')|\leq h_1(x) \bigl(|z-z'|+|y-y'|\bigr)
        \end{displaymath}
        for all $y,y'\in\R,\, z,z'\in\R^d$.
        \item The function $\beta$ satisfies the local Lipschitz bound
        \begin{displaymath}
        	|\beta(x, y, z)-\beta(x', y', z')|\leq L(y, y')|y-y'|+h_1(x)|z-z'|
        \end{displaymath}
        for all $y,y'\in\R,\, z,z'\in\R^d$. Here we assume that the Lipschitz constant $L$ satisfies the growth bound
        \begin{equation}\label{eq:Lbound}
            L(y, y')\leq C\bigl(1+|y|^{p^*-2}+|y'|^{p^*-2}\bigr),
        \end{equation}
        where $p^*$ is the Sobolev conjugate of $p=2$, i.e.,
        \begin{displaymath}
            p^*=\frac{2d}{d-2},
        \end{displaymath}
        or if $d=2$, then we can take any $p^*\geq 2$.
        \item The functions $\alpha$ and $\beta$ satisfy the uniform monotonicity 
        \begin{align*}
        	&\bigl(\alpha(x, y, z)-\alpha(x, y', z')\bigr)\cdot(z-z')+\bigl(\beta(x, y, z)-\beta(x, y', z')\bigr)(y-y')\\
            &\quad\geq h_2(x)|z-z'|^2-h_3(x)|y-y'|^2
        \end{align*}
        for all $y,y'\in\R,\, z,z'\in\R^d$. Here,
        \begin{equation}\label{eq:monbound}
            \inf_{x\in\Omega} h_2(x) > C_p \sup_{x\in\Omega} h_3(x),
        \end{equation}
        where $C_p$ is the largest of the Poincaré constants of $\Omega$ and $\Omega_i$.
        \item The source term $f\in V^*$ and can be written
        \begin{displaymath}
            \langle f, v \rangle=\langle f_1, \restr{v}{\Omega_1}\rangle+\langle f_2, \restr{v}{\Omega_2}\rangle\qquad\text{for all } v\in V,
        \end{displaymath}
        with $f_i\in V_i^*$, $i=1, 2$.
    \end{itemize}
    \end{assumption}
    For sake of simplicity we will only derive convergence for our second method under the following restriction to semilinear equations.
    \begin{assumption}\label{ass:eqjacobian}
        The functions $\alpha$ and $\beta$ satisfy the following conditions, where $h_\ell$ are nonnegative functions in $L^\infty(\Omega)$.
        \begin{itemize}
        \item The function $\alpha$ is of the form $\alpha(x, y, z)=J_\alpha(x)z$ with $J_\alpha\in L^\infty(\Omega)^{d\times d}$. That is, $\alpha$ is linear in $z$.
        \item The function $\beta(x, y, z)=\beta(x, y)$ is differentiable with respect to $y$ and has measurable Jacobian $J_\beta:\Omega\times\R\rightarrow\R$, which satisfies the bounds
        \begin{align*}
        	|J_\beta(x, y)|&\leq h_1(x)+L(y),\\
        	|J_\beta(x, y)-J_\beta(x, y')|&\leq \Tilde{L}(y, y')|y-y'|
        \end{align*}
        for all $y,y'\in\R$. Here, we assume that the constants $L$ and $\Tilde{L}$ satisfies the growth bounds
        \begin{displaymath}
            L(y)\leq C\bigl(1+|y|^{p^*-2}\bigr)\quad\text{and} \quad\Tilde{L}(y, y')\leq C\bigl(1+|y|^{p^*-3}+|y'|^{p^*-3}\bigr),
        \end{displaymath}
        respectively, where $p^*$ is the Sobolev conjugate of $p=2$, i.e.,
        \begin{displaymath}
            p^*=\frac{2d}{d-2},
        \end{displaymath}
        or if $d=2$, then we can take any $p^*\geq 2$.
        \item The Jacobians $J_\alpha$ and $J_\beta$ satisfy the coercivity bound
        \begin{displaymath}
        J_\alpha(x)z'\cdot z'+J_\beta(x, y)(y')^2\geq h_2(x)|z'|^2-h_3(x)|y'|^2
        \end{displaymath}
        for all $y,y'\in\R,\, z,z'\in\R^d$, where $h_2,h_3$ are as in~\cref{eq:monbound}.
        \end{itemize}
    \end{assumption}
    \begin{example}\label{ex:semi}
        Consider the semilinear reaction-diffusion equation~\cref{eq:semi}, which corresponds to $\alpha(z)=z$ and $\beta(y)=|y|y$. This equation satisfies~\cref{ass:eq} with $h_1(x)=h_2(x)=1$, $h_3(x)=0$, and
        \begin{displaymath}
            L(y, y') = C\bigl(|y|+|y'|\bigr).
        \end{displaymath}
        The Lipschitz constant satisfies the growth bound for $d=2, 3$. Similarly, the equation satisfies~\cref{ass:eqjacobian}.
    \end{example}
    \begin{example}\label{ex:quasi}
        Consider the quasilinear equation with
        \begin{displaymath}
             \alpha(x, u, \nabla u)=\nabla u+\gamma(x)\sin(|\nabla u|)(1,\ldots,1)^\mathrm{T}
    \end{displaymath}
    and $\beta=0$. This equation satisfies~\cref{ass:eq} assuming that $\|\gamma\|_{L^\infty(\Omega)}$ is small enough, but will not satisfy~\cref{ass:eqjacobian} since $\alpha$ is not linear in $z$.
    \end{example}
    \begin{remark}
        Let $p\geq 2$ and consider the quasilinear equation~\cref{eq:3lap}, which arises as an implicit Euler step of the time dependent $p$-Laplace equation and corresponds to $\alpha(z)=|z|z$ and $\beta(y)=y$. This equation does not fulfill~\cref{ass:eq} or~\cref{ass:eqjacobian}, but will serve to illustrate the efficiency of our numerical schemes.
    \end{remark}
    \section{Weak formulations}\label{sec:weak}
    We define the operators $A:V\rightarrow V^*$ and $A_i: V_i\rightarrow V_i^*$ by
    \begin{align*}
        \langle Au, v\rangle_{}&=\int_{\Omega}\alpha(x, u, \nabla u)\cdot\nabla v+\beta(x, u, \nabla u) v \mathrm{d}x\qquad\text{and}\\
        \langle A_iu_i, v_i\rangle&=\int_{\Omega_i}\alpha(x, u_i, \nabla u_i)\cdot\nabla v_i+\beta(x, u_i, \nabla u_i) v_i \mathrm{d}x,
    \end{align*}
    respectively.
    \begin{lemma}\label{lemma:ai}
        Suppose that~\cref{ass:domain,ass:eq} hold. The operators $A_i$ then satisfy the Lipschitz condition
        \begin{equation*}
            \|A_iu-A_iv\|_{V_i^*}\leq L\bigl(\|u\|_{V_i},\|v\|_{V_i}\bigr)\|u-v\|_{V_i},
        \end{equation*}
        where $L$ satisfies the growth bound
        \begin{displaymath}
            L\bigl(\|u\|_{V_i},\|v\|_{V_i}\bigr)\leq C\bigl(1+\|u\|_{V_i}^{p^*-2}+\|v\|_{V_i}^{p^*-2}\bigr).
        \end{displaymath}
        Moreover, the operators $A_i$ are uniformly monotone.
    \end{lemma}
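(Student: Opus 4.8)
The plan is to test the difference $A_iu - A_iv$ against an arbitrary $w \in V_i$ and estimate the resulting integral termwise, treating the $\alpha$- and $\beta$-contributions separately. Writing
\[
\langle A_iu - A_iv, w\rangle = \int_{\Omega_i}\bigl(\alpha(x,u,\nabla u) - \alpha(x,v,\nabla v)\bigr)\cdot\nabla w + \bigl(\beta(x,u,\nabla u) - \beta(x,v,\nabla v)\bigr)w\,\dx,
\]
the Lipschitz bound on $\alpha$ from \cref{ass:eq} gives $|\alpha(x,u,\nabla u) - \alpha(x,v,\nabla v)| \le \|h_1\|_{L^\infty}(|\nabla u - \nabla v| + |u-v|)$ pointwise, so Cauchy--Schwarz bounds the first integral by $C\|u-v\|_{V_i}\|w\|_{V_i}$ with a constant independent of $u,v$. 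The same reasoning disposes of the $h_1|\nabla u - \nabla v|$ part of the $\beta$-estimate.

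The essential point is the remaining term $\int_{\Omega_i} L(u,v)|u-v||w|\,\dx$, where the growth bound $L(u,v) \le C(1 + |u|^{p^*-2} + |v|^{p^*-2})$ must be absorbed by Sobolev embedding. I would apply H\"older's inequality with exponents $\tfrac{p^*}{p^*-2}, p^*, p^*$ (whose reciprocals sum to $1$) to the three factors $|u|^{p^*-2}$, $|u-v|$, $|w|$, giving $\||u|^{p^*-2}\|_{L^{p^*/(p^*-2)}}\|u-v\|_{L^{p^*}}\|w\|_{L^{p^*}} = \|u\|_{L^{p^*}}^{p^*-2}\|u-v\|_{L^{p^*}}\|w\|_{L^{p^*}}$, and likewise for the $|v|^{p^*-2}$ and constant terms. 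The continuous embedding $H^1(\Omega_i)\hookrightarrow L^{p^*}(\Omega_i)$ --- valid for $p^* = 2d/(d-2)$ when $d=3$ and for every finite $p^*$ when $d=2$ --- then converts each $L^{p^*}$ norm into a $V_i$ norm, yielding $\langle A_iu - A_iv, w\rangle \le C(1 + \|u\|_{V_i}^{p^*-2} + \|v\|_{V_i}^{p^*-2})\|u-v\|_{V_i}\|w\|_{V_i}$. Taking the supremum over $\|w\|_{V_i}\le 1$ produces the claimed Lipschitz estimate together with its growth bound.

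For uniform monotonicity, I would test against $w = u - v$ and invoke the pointwise monotonicity of \cref{ass:eq}:
\[
\langle A_iu - A_iv, u-v\rangle \ge \int_{\Omega_i} h_2(x)|\nabla(u-v)|^2 - h_3(x)|u-v|^2\,\dx \ge \Bigl(\inf_{\Omega_i} h_2\Bigr)\|\nabla(u-v)\|_{L^2}^2 - \Bigl(\sup_{\Omega_i}h_3\Bigr)\|u-v\|_{L^2}^2.
\]
Applying the Poincar\'e inequality $\|u-v\|_{L^2}^2 \le C_p\|\nabla(u-v)\|_{L^2}^2$ on $V_i$ and using the condition \cref{eq:monbound} shows the right-hand side dominates $(\inf_{\Omega_i} h_2 - C_p\sup_{\Omega_i} h_3)\|\nabla(u-v)\|_{L^2}^2 > 0$. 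Finally, Poincar\'e renders the seminorm $\|\nabla\cdot\|_{L^2}$ equivalent to the full $V_i$ norm on $V_i$, upgrading the estimate to $\langle A_iu - A_iv, u-v\rangle \ge c\|u-v\|_{V_i}^2$.

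I expect the main obstacle to be the bookkeeping in the H\"older--Sobolev estimate for the $\beta$-term: one must verify that the growth exponent $p^*-2$ is precisely the one for which the three-factor H\"older split closes against the critical embedding $H^1\hookrightarrow L^{p^*}$, and then treat the $d=2$ case separately, where $p^*$ is unconstrained and any finite exponent suffices. Everything else is routine once the pointwise bounds of \cref{ass:eq} are in hand.
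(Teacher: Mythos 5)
Your proposal is correct and follows essentially the same route as the paper's proof: the generalized H\"older inequality with exponents $\tfrac{p^*}{p^*-2}, p^*, p^*$ that you apply directly is exactly the paper's ``three-term H\"older inequality'' with $q=\tfrac{p^*}{p^*-2}$, followed by the same Sobolev embedding $V_i\hookrightarrow L^{p^*}(\Omega_i)$, and the monotonicity argument via Poincar\'e and~\cref{eq:monbound} is identical. If anything, you are slightly more careful than the paper's displayed estimate, which silently drops the $h_1|u-v||\nabla w|$ contribution from the Lipschitz bound on $\alpha$ (harmless, since Cauchy--Schwarz absorbs it as you note).
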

    \begin{proof}
        Let
        \begin{displaymath}
            q=\frac{p^*}{p^*-2},
        \end{displaymath}
        as in~\cref{ass:eq}. This implies that that
        \begin{displaymath}
            q^*=\frac{q}{q-1}=\frac{p^*}{2}.
        \end{displaymath}
        It follows that for $f\in L^q(\Omega_i)$ and $g, h\in L^{p^*}(\Omega_i)$ we have the three term Hölder inequality
        \begin{align*}
            \int_{\Omega_i}|fgh|\mathrm{d}x&\leq\|f\|_{L^q(\Omega_i)}\Bigl(\int_{\Omega_i}|gh|^{q^*}\mathrm{d}x\Bigr)^{1/q^*}\leq\|f\|_{L^q(\Omega_i)}\||g|^{q^*}\|_{L^2(\Omega_i)}^{1/q^*}\||h|^{q^*}\|_{L^2(\Omega_i)}^{1/q^*}\\
            &=\|f\|_{L^q(\Omega_i)}\|g\|_{L^{2q^*}(\Omega_i)}\|h\|_{L^{2q^*}(\Omega_i)}=\|f\|_{L^q(\Omega_i)}\|g\|_{L^{p^*}(\Omega_i)}\|h\|_{L^{p^*}(\Omega_i)}.
        \end{align*}
        Using this, the regular Hölder inequality, and the Lipschitz continuity of $\alpha$ and $\beta$ yields
        \begin{align*}
            &\bigl|\langle A_iu-A_iv, w\rangle\bigr|\\
            &\quad=\biggl|\int_{\Omega_i}\alpha(x, u, \nabla u)\cdot\nabla w-\alpha(x, v, \nabla v)\cdot\nabla w+\beta(x, u, \nabla u) w-\beta(x, v, \nabla v) w \mathrm{d}x\biggr|\\
            &\quad\leq\int_{\Omega_i}\bigl|\alpha(x, u, \nabla u)-\alpha(x, v, \nabla v)\bigr||\nabla w|+\bigl|\beta(x, u, \nabla u) -\beta(x, v, \nabla v)\bigr| |w| \mathrm{d}x\\
            &\quad\leq\int_{\Omega_i}h_1|\nabla u-\nabla v||\nabla w|+\bigl(L(u, v)|u -v|+h_1|\nabla u-\nabla v|\bigr) |w| \mathrm{d}x\\
            &\quad\leq C|u-v|_{V_i}\|w\|_{V_i}+\|L(u, v)\|_{L^q(\Omega_i)}\|u-v\|_{L^{p*}(\Omega_i)}\|w\|_{L^{p*}(\Omega_i)}.
        \end{align*}
        The Lipschitz constant can now be estimated as follows
        \begin{align*}
            \|L(u, v)\|_{L^q(\Omega_i)}&\leq C\|1+|u|^{p^*-2}+|v|^{p^*-2}\|_{L^q(\Omega_i)}\\
            &\leq C\bigl(\|1\|_{L^q(\Omega_i)}+\||u|^{p^*-2}\|_{L^q(\Omega_i)}+\||v|^{p^*-2}\|_{L^q(\Omega_i)}\bigr)\\
            &\leq C\bigl(1+\|u\|_{L^{p^*}(\Omega_i)}^{p^*-2}+\|v\|_{L^{p^*}(\Omega_i)}^{p^*-2}\bigr)
        \end{align*}
        and by the Sobolev embedding $V_i\hookrightarrow L^{p*}(\Omega_i)$ we have that
        \begin{align*}
            \bigl|\langle A_iu-A_iv, w\rangle\bigr|\leq C\bigl(1+\|u\|_{V_i}^{p^*-2}+\|v\|_{V_i}^{p^*-2}\bigr)\|u-v\|_{V_i}\|w\|_{V_i}.
        \end{align*}
        To prove uniform monotonicity we use the monotonicity of $\alpha,\beta$ and~\cref{eq:monbound} to get that
        \begin{align*}
            &\langle A_iu-A_iv, u-v\rangle\\
            &\quad=\int_{\Omega_i}\Bigl(\alpha(x, u, \nabla u)\cdot\nabla (u-v)-\alpha(x, v, \nabla v)\cdot\nabla (u-v)\\
            &\qquad+\beta(x, u, \nabla u) (u-v)-\beta(x, v, \nabla v) (u-v)\Bigr)\mathrm{d}x\\
            &\quad\geq\int_{\Omega_i}h_2(x)|\nabla u-\nabla v|^2-h_3(x)|u-v|^2 \mathrm{d}x\\
            &\quad\geq\inf_{x\in\Omega} h_2(x)\int_{\Omega_i}|\nabla u-\nabla v|^2\mathrm{d}x-\sup_{x\in\Omega_i} h_3(x)\int_{\Omega_i}|u-v|^2 \mathrm{d}x\\
            &\quad\geq\inf_{x\in\Omega_i} h_2(x)|u-v|_{V_i}^2-\sup_{x\in\Omega_i}h_3(x)\|u-v\|_{L^2(\Omega_i)}^2\\
            &\quad\geq c\|u-v\|_{V_i}^2.
        \end{align*}
    \end{proof}
    The weak formulation of the quasilinear elliptic equation is to find $u\in V$ such that $Au=f$ in $V^*$, or equivalently,
    \begin{equation}\label{eq:weak}
        \langle Au, v\rangle=\langle f, v\rangle\qquad \textrm{for all } v\in V.
    \end{equation}
    Similarly, the weak formulation of the inhomogeneous problem on $\Omega_i$ is the following: For $\eta\in\Lambda$ find $u_i\in V_i$ such that $T_iu_i=\eta$ and
    \begin{equation}\label{eq:weaki}
        \langle A_iu_i, v_i\rangle=\langle f, v_i\rangle \qquad\textrm{for all }v_i\in V_i^0.
    \end{equation}
    This problem has a unique solution and defines a solution operator
    \begin{displaymath}
        F_i:\Lambda\rightarrow V_i:\eta\mapsto u_i,
    \end{displaymath}
    see e.g.~\cite[Lemma 6.1]{ehee22}.
    \begin{lemma}\label{lemma:Fi}
        Suppose that~\cref{ass:domain,ass:eq} hold. The solution operators $F_i:\Lambda\rightarrow V_i$ then satisfy the following local Lipschitz bound
        \begin{displaymath}
            \|F_i\eta-F_i\mu\|_{V_i}\leq L\bigl(\|F_i\eta\|_{V_i}, \|F_i\mu\|_{V_i}\bigr) \|\eta-\mu\|_\Lambda,
        \end{displaymath}
        where $L$ has the following growth
        \begin{displaymath}
            L\bigl(\|u\|_{V_i},\|v\|_{V_i}\bigr)\leq C\bigl(1+\|u\|_{V_i}^{p^*-2}+\|v\|_{V_i}^{p^*-2}\bigr).
        \end{displaymath}
    \end{lemma}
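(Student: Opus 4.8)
The plan is to combine the Galerkin-type orthogonality built into the definition of $F_i$ with the uniform monotonicity and local Lipschitz continuity of $A_i$ established in~\cref{lemma:ai}. Write $u_i = F_i\eta$ and $w_i = F_i\mu$, so that $T_iu_i = \eta$, $T_iw_i = \mu$, and both satisfy~\cref{eq:weaki} against the same test space $V_i^0$. Subtracting the two weak equations gives the orthogonality relation $\langle A_iu_i - A_iw_i, v_i\rangle = 0$ for every $v_i \in V_i^0$, which is the key structural fact I will exploit.

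The difficulty is that $u_i - w_i$ has trace $\eta - \mu$ on $\Gamma$ and therefore does not lie in $V_i^0$, so I cannot test the orthogonality relation directly with $u_i - w_i$. To remedy this I would lift the boundary data using the bounded right inverse $R_i$ of the trace operator: set $z = R_i(\eta - \mu) \in V_i$, so that $T_iz = \eta - \mu$ and $\|z\|_{V_i} \leq C\|\eta - \mu\|_\Lambda$. Then $(u_i - w_i) - z$ has vanishing trace and hence belongs to $V_i^0$, making it an admissible test function.

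Testing the orthogonality relation with $v_i = (u_i - w_i) - z$ yields
\begin{displaymath}
\langle A_iu_i - A_iw_i, u_i - w_i\rangle = \langle A_iu_i - A_iw_i, z\rangle.
\end{displaymath}
I would then bound the left-hand side below by uniform monotonicity, giving $\langle A_iu_i - A_iw_i, u_i - w_i\rangle \geq c\|u_i - w_i\|_{V_i}^2$, and bound the right-hand side above by the duality pairing together with the Lipschitz estimate of~\cref{lemma:ai}, giving $\langle A_iu_i - A_iw_i, z\rangle \leq L(\|u_i\|_{V_i}, \|w_i\|_{V_i})\|u_i - w_i\|_{V_i}\|z\|_{V_i}$. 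Dividing by $\|u_i - w_i\|_{V_i}$ (the case $u_i = w_i$ being trivial) and inserting $\|z\|_{V_i} \leq C\|\eta - \mu\|_\Lambda$ then gives the claimed bound. The growth bound on $L$ transfers verbatim from~\cref{lemma:ai} after recalling $u_i = F_i\eta$ and $w_i = F_i\mu$, so no additional work is needed there.

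I expect the only genuine obstacle to be the correct handling of the inhomogeneous boundary condition through the right inverse $R_i$, i.e.\ constructing the admissible test function $(u_i - w_i) - z \in V_i^0$ that still controls $u_i - w_i$; once this is in place, the estimate follows mechanically by playing the monotonicity lower bound against the Lipschitz upper bound.
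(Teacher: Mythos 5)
Your proposal is correct and follows essentially the same argument as the paper: the paper likewise tests with the zero-trace function $(R_i\eta-R_i\mu)-(F_i\eta-F_i\mu)$ (the negative of your $(u_i-w_i)-z$), plays the uniform monotonicity of $A_i$ from~\cref{lemma:ai} against its local Lipschitz bound, uses the boundedness of $R_i$, and divides by $\|F_i\eta-F_i\mu\|_{V_i}$. The only cosmetic difference is ordering: the paper starts from the monotonicity lower bound and cancels the zero-trace term via the weak equations, whereas you start from the orthogonality relation and then invoke monotonicity, which is the same computation.
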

    \begin{proof}
        Let $w_i=(R_i\eta-R_i\mu)-(F_i\eta-F_i\mu)$ and note that $T_iw_i=0$, which implies that $w_i\in V_i^0$. Therefore, by~\cref{eq:weaki,lemma:ai} we get
        \begin{align*}
            c\|F_i\eta-F_i\mu\|^2_{V_i}
	        &\leq \langle A_iF_i\eta-A_iF_i\mu, F_i\eta-F_i\mu\rangle\\
	        &= \langle A_iF_i\eta-A_iF_i\mu, R_i(\eta-\mu)\rangle- \langle A_iF_i\eta-A_iF_i\mu, w_i\rangle\\
	        &= \langle A_iF_i\eta-A_iF_i\mu, R_i(\eta-\mu)\rangle- \langle f_i-f_i, w_i\rangle\\
	        &\leq L\bigl(\|F_i\eta\|_{V_i}, \|F_i\mu\|_{V_i}\bigr) \|F_i\eta-F_i\mu\|_{V_i}\|R_i(\eta-\mu)\|_{V_i}\\
            &\leq L\bigl(\|F_i\eta\|_{V_i}, \|F_i\mu\|_{V_i}\bigr) \|F_i\eta-F_i\mu\|_{V_i}\|\eta-\mu\|_\Lambda.
        \end{align*}
        Dividing by $c\|F_i\eta-F_i\mu\|_{V_i}$ gives the Lipshchitz bound.
    \end{proof}
    \section{The transmission problem and the Steklov--Poincaré formulation}\label{sec:tran}
    The transmission problem is to find $(u_1, u_2)\in V_1\times V_2$ such that
\begin{equation}\label{eq:weaktran}
	\left\{\begin{aligned}
	     \langle A_iu_i, v_i\rangle&=\langle f_i, v_i\rangle & & \text{for all } v_i\in V_i^0,\, i=1,2,\\
	     T_1u_1&=T_2u_2, & &\\
	     \textstyle\sum_{i=1}^2 \langle A_i& u_i, R_i\mu\rangle-\langle f_i, R_i\mu\rangle=0 & &\text{for all }\mu\in \Lambda. 
	\end{aligned}\right.
\end{equation}
    The transmission problem is equivalent to the weak problem~\cref{eq:weak}, see~\cite[Theorem 5.2]{ehee22}. We define the nonlinear Steklov--Poincaré operators $S_i:\Lambda\rightarrow\Lambda^*$ by
    \begin{displaymath}
        \langle S_i\eta, \mu\rangle_{\Lambda^*\times\Lambda} =\langle A_iF_i\eta-f_i, R_i\mu\rangle_{V_i^*\times V_i}.
    \end{displaymath}
    \begin{remark}\label{rem:spind}
        We will repeatedly use that the Steklov--Poincaré operators are independent of the choice of extension $R_i$. To see this consider arbitrary extensions $R_i, \Tilde{R}_i$ such that $T_iR_i=T_i\Tilde{R}_i=I$ and let $w_i=R_i\mu-\Tilde{R}_i\mu$. Then $T_iw_i=0$ and therefore $w_i\in V_i^0$. It follows from the definition of $F_i$ that
        \begin{displaymath}
            \langle S_i\eta, \mu\rangle=\langle A_iF_i\eta-f_i, R_i\mu\rangle=\langle A_iF_i\eta-f_i, w_i\rangle+\langle A_iF_i\eta-f_i, \Tilde{R}_i\mu\rangle=\langle A_iF_i\eta-f_i, \Tilde{R}_i\mu\rangle.
        \end{displaymath}
    \end{remark}
    Writing out the definitions of the Steklov--Poincaré operators shows that the transmission problem can be reformulated as finding $\eta\in\Lambda$ such that
    \begin{equation}\label{eq:sp}
        S\eta=0\quad\textrm{in }\Lambda^*.
    \end{equation}
    The solution to the transmission problem can then be recovered as $(u_1, u_2)=(F_1\eta, F_2\eta)$. Conversely, the solution to the Steklov--Poincaré equation can be recovered from the solution to the transmission problem $(u_1, u_2)$ by setting $\eta=T_1u_1=T_2u_2$. For more details, see~\cite[Lemma 6.2]{ehee22}.

    The Steklov--Poincaré operators inherit the properties of the operators $A_i$.
    \begin{theorem}\label{thm:spop}
        Suppose that~\cref{ass:domain,ass:eq} hold. The Steklov--Poincaré operators $S_i:\Lambda\rightarrow \Lambda^*$ then satisfy the local Lipschitz condition
        \begin{align*}
            \| S_i\eta-S_i\mu\|_{\Lambda^*}&\leq L\bigl(\|\eta\|_\Lambda, \|\mu\|_\Lambda\bigr)^2\|\eta-\mu\|_\Lambda,
        \end{align*}
        where $L$ satisfies the growth bound
        \begin{displaymath}
            L\bigl(\|\eta\|_{\Lambda},\|\mu\|_{\Lambda}\bigr)\leq C\bigl(1+\|\eta\|_{\Lambda}^{p^*-2}+\|\mu\|_{\Lambda}^{p^*-2}\bigr).
        \end{displaymath}
        Moreover, $S_i$ is uniformly monotone. The same result holds for $S$.
    \end{theorem}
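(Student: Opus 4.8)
The plan is to transfer both properties from the subdomain operators $A_i$ and the solution operators $F_i$, already established in \cref{lemma:ai,lemma:Fi}, to the Steklov--Poincaré operators, exploiting the defining identity $\langle S_i\eta - S_i\mu, \lambda\rangle = \langle A_iF_i\eta - A_iF_i\mu, R_i\lambda\rangle$ together with the boundedness of the extension $R_i$ and the trace $T_i$. Since $S$ decomposes as $S = S_1 + S_2$ through the Steklov--Poincaré formulation, the corresponding statement for $S$ will follow at the end by summation.

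For the local Lipschitz bound I would estimate, for arbitrary $\lambda \in \Lambda$,
\[
|\langle S_i\eta - S_i\mu, \lambda\rangle| \le \|A_iF_i\eta - A_iF_i\mu\|_{V_i^*}\,\|R_i\lambda\|_{V_i} \le C\,\|A_iF_i\eta - A_iF_i\mu\|_{V_i^*}\,\|\lambda\|_\Lambda,
\]
and then chain the Lipschitz estimate for $A_i$ from \cref{lemma:ai} with that for $F_i$ from \cref{lemma:Fi}, giving $\|A_iF_i\eta - A_iF_i\mu\|_{V_i^*} \le L(\|F_i\eta\|_{V_i},\|F_i\mu\|_{V_i})^2\,\|\eta - \mu\|_\Lambda$; this already produces the squared factor in the statement after taking the supremum over $\|\lambda\|_\Lambda = 1$. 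To re-express the growth in terms of $\|\eta\|_\Lambda$ and $\|\mu\|_\Lambda$ rather than $\|F_i\eta\|_{V_i}$, I would derive an a priori estimate for $\|F_i\eta\|_{V_i}$: writing $F_i\eta - R_i\eta \in V_i^0$, testing the weak equation \cref{eq:weaki} against it, and invoking the uniform monotonicity of $A_i$ together with $\|R_i\eta\|_{V_i} \le C\|\eta\|_\Lambda$ yields a polynomial bound on $\|F_i\eta\|_{V_i}$ in $\|\eta\|_\Lambda$, which then substitutes into the growth function.

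The uniform monotonicity carries the essential idea. Starting from $\langle S_i\eta - S_i\mu, \eta - \mu\rangle = \langle A_iF_i\eta - A_iF_i\mu, R_i(\eta - \mu)\rangle$, the second argument is $R_i(\eta-\mu)$ and not $F_i\eta - F_i\mu$, so \cref{lemma:ai} does not apply directly. The key observation is that $z_i := (F_i\eta - F_i\mu) - R_i(\eta - \mu)$ lies in $V_i^0$, because $T_i(F_i\eta - F_i\mu) = \eta - \mu = T_iR_i(\eta-\mu)$; and since both $F_i\eta$ and $F_i\mu$ solve \cref{eq:weaki} with the same data $f_i$, the residual $A_iF_i\eta - A_iF_i\mu$ annihilates every element of $V_i^0$, so $\langle A_iF_i\eta - A_iF_i\mu, z_i\rangle = 0$. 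Thus $R_i(\eta-\mu)$ may be replaced by $F_i\eta - F_i\mu$ at no cost, giving
\[
\langle S_i\eta - S_i\mu, \eta - \mu\rangle = \langle A_iF_i\eta - A_iF_i\mu, F_i\eta - F_i\mu\rangle \ge c\,\|F_i\eta - F_i\mu\|_{V_i}^2 .
\]
Boundedness of the trace then gives $\|\eta - \mu\|_\Lambda = \|T_i(F_i\eta - F_i\mu)\|_\Lambda \le C\|F_i\eta - F_i\mu\|_{V_i}$, which converts the right-hand side into $c\|\eta - \mu\|_\Lambda^2$ with a constant independent of $\eta$ and $\mu$. Summing over $i$ preserves both uniform monotonicity and the local Lipschitz bound, yielding the claim for $S$.

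The main obstacle I anticipate is not the structural argument but the bookkeeping of the polynomial growth: propagating the a priori bound on $\|F_i\eta\|_{V_i}$ through the squared Lipschitz constant while keeping the exponent consistent with the growth bound \cref{eq:Lbound} requires care. By contrast, the monotonicity rests on the single clean observation that the interior residual is orthogonal to $V_i^0$, after which uniform monotonicity of $A_i$ and the trace bound finish the argument.
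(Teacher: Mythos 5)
Your proposal is correct and follows the paper's proof almost verbatim in its two main steps: the Lipschitz bound is obtained by chaining \cref{lemma:ai} with \cref{lemma:Fi} and using $\|R_i\lambda\|_{V_i}\le C\|\lambda\|_\Lambda$; the uniform monotonicity by noting that $(F_i\eta-F_i\mu)-R_i(\eta-\mu)\in V_i^0$ is annihilated by the residual $A_iF_i\eta-A_iF_i\mu$, then invoking the uniform monotonicity of $A_i$ and the boundedness of the trace; and the statement for $S$ by writing $S=S_1+S_2$. The single point where you deviate is the conversion of $L\bigl(\|F_i\eta\|_{V_i},\|F_i\mu\|_{V_i}\bigr)$ into a function of $\|\eta\|_\Lambda,\|\mu\|_\Lambda$: the paper simply cites the Lipschitz continuity of $F_i$ as implying the linear bound $\|F_i\eta\|_{V_i}\le C(1+\|\eta\|_\Lambda)$, whereas you rederive an a priori bound by testing \cref{eq:weaki} with $F_i\eta-R_i\eta\in V_i^0$ and using the uniform monotonicity of $A_i$. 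Your variant is self-contained (and avoids the slight circularity in the paper's shortcut, since the Lipschitz ``constant'' of $F_i$ in \cref{lemma:Fi} depends on $\|F_i\eta\|_{V_i}$ itself), but it only yields $\|F_i\eta\|_{V_i}\le C\bigl(1+\|\eta\|_\Lambda^{p^*-1}\bigr)$, so after substitution the growth exponent of $L$ comes out as $(p^*-1)(p^*-2)$ rather than the $p^*-2$ stated in the theorem; this is precisely the bookkeeping issue you flag. The discrepancy is harmless for every later use of the result (\cref{cor:spsol} and \cref{thm:conv1,thm:conv2} only need $L$ to be bounded on bounded sets), but as written your argument establishes the theorem with a weaker growth bound than the one claimed.
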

    \begin{proof}
        Let $L(y, y')$ denote any function satisfying~\cref{eq:Lbound}, with constant possibly changing line to line. To prove the Lipschitz bound we employ~\cref{lemma:ai,lemma:Fi} to get
        \begin{align*}
            &\bigl|\langle S_i\eta-S_i\mu, \lambda\rangle\bigr|=\bigl|\langle A_iF_i\eta-A_iF_i\mu, R_i\lambda\rangle\bigr|\\
            &\quad\leq L\bigl(\|F_i\eta\|_{V_i}, \|F_i\mu\|_{V_i}\bigr)\|F_i\eta-F_i\mu\|_{V_i}\|R_i\lambda\|_{V_i}\\
            &\quad\leq L\bigl(\|F_i\eta\|_{V_i}, \|F_i\mu\|_{V_i}\bigr)^2\|\eta-\mu\|_\Lambda\|R_i\lambda\|_{V_i}\\
            &\quad\leq L\bigl(\|\eta\|_\Lambda, \|\mu\|_\Lambda\bigr)^2\|\eta-\mu\|_\Lambda\|\lambda\|_\Lambda.
        \end{align*}
        The last inequality follows from the Lipschitz continuity of $F_i$ since it implies that
        \begin{displaymath}
            \|F_i\eta\|_{V_i}\leq C(1+\|\eta\|_\Lambda).
        \end{displaymath}
        For the uniform monotonicity, let $w_i=(R_i\eta-R_i\mu)-(F_i\eta-F_i\mu)$ and note that $T_iw_i=0$, which implies that $w_i\in V_i^0$. Therefore,
        \begin{align*}
            &\langle S_i\eta-S_i\mu, \eta-\mu\rangle=\langle A_iF_i\eta-A_iF_i\mu, R_i\eta-R_i\mu\rangle\\
            &\quad=\langle A_iF_i\eta-A_iF_i\mu, F_i\eta-F_i\mu\rangle+\langle A_iF_i\eta-A_iF_i\mu, w_i\rangle\\
            &\quad=\langle A_iF_i\eta-A_iF_i\mu, F_i\eta-F_i\mu\rangle\\
            &\quad\geq c\|F_i\eta-F_i\mu\|_{V_i}^2\geq c\|T_iF_i\eta-T_iF_i\mu\|_\Lambda^2\\
            &\quad= c\|\eta-\mu\|_\Lambda^2.
        \end{align*}
        The fact that the same holds for $S$ follows since $S=S_1+S_2$.
    \end{proof}
    \begin{corollary}\label{cor:spsol}
        Suppose that~\cref{ass:domain,ass:eq} hold. Then the Steklov--Poincaré operators $S_i, S$ are bijective. In particular, the Steklov--Poincaré equation~\cref{eq:sp} has a unique solution $\eta\in\Lambda$.
    \end{corollary}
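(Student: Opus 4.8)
The plan is to invoke the Browder--Minty theorem on monotone operators, which states that a demicontinuous, coercive, monotone operator from a real reflexive Banach space into its dual is surjective. Since $\Lambda$ is a Hilbert space, and hence reflexive, it suffices to verify these three hypotheses for each of $S_i$ and $S$, after which bijectivity and the existence of a unique solution to~\cref{eq:sp} follow with little extra work. All of the analytic content needed is already packaged in~\cref{thm:spop}, so the argument is essentially a matter of assembling standard facts about monotone operators.

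First I would establish demicontinuity. \Cref{thm:spop} provides the local Lipschitz bound $\|S_i\eta-S_i\mu\|_{\Lambda^*}\leq L(\|\eta\|_\Lambda,\|\mu\|_\Lambda)^2\|\eta-\mu\|_\Lambda$. If $\eta^n\to\eta$ in $\Lambda$, then the sequence $(\|\eta^n\|_\Lambda)$ is bounded, so the growth bound on $L$ keeps the Lipschitz constants uniformly bounded along the sequence. Hence $S_i\eta^n\to S_i\eta$ strongly in $\Lambda^*$, which in particular yields $\langle S_i\eta^n-S_i\eta,\lambda\rangle\to 0$ for every $\lambda\in\Lambda$. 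Thus $S_i$, and likewise $S=S_1+S_2$, is demicontinuous.

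Next, coercivity. The uniform monotonicity of $S_i$ from~\cref{thm:spop} gives, exactly as computed in~\cref{sec:prel} for a general uniformly monotone operator, the lower bound $\langle S_i\eta,\eta\rangle/\|\eta\|_\Lambda\geq c\|\eta\|_\Lambda-\|S_i(0)\|_{\Lambda^*}$, which tends to infinity as $\|\eta\|_\Lambda\to\infty$; the same holds for $S$. With demicontinuity, coercivity, and monotonicity in hand, Browder--Minty yields surjectivity of $S_i$ and $S$. Injectivity is then immediate from uniform monotonicity: if $S\eta=S\mu$ then $0=\langle S\eta-S\mu,\eta-\mu\rangle\geq c\|\eta-\mu\|_\Lambda^2$, forcing $\eta=\mu$, and the same argument applies to $S_i$. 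Combining surjectivity and injectivity gives bijectivity of $S_i$ and $S$. In particular $0\in\Lambda^*$ lies in the range of $S$, so~\cref{eq:sp} has a solution, which is unique by injectivity.

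I do not expect a genuine obstacle, since the heavy lifting was done in~\cref{thm:spop}. The only point requiring a little care is the passage from the \emph{local} Lipschitz bound to demicontinuity, where one must exploit the boundedness of strongly convergent sequences to tame the growth factor $L$; thereafter the result is a textbook application of the theory of monotone operators.
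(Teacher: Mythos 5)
Your proposal is correct and takes essentially the same route as the paper: both arguments verify demicontinuity from the local Lipschitz bound in~\cref{thm:spop} by exploiting boundedness of convergent sequences, deduce coercivity and monotonicity from the uniform monotonicity established there, and conclude via the Browder--Minty theorem. The only cosmetic difference is that you spell out injectivity from uniform monotonicity explicitly, whereas the paper leaves this implicit in its appeal to Browder--Minty.
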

    \begin{proof}
        We will use the Browder--Minty theorem~\cite[Theorem 26.A]{zeidler} in order to prove bijectivity and must therefore show that $S_i$ is demicontinuous, coercive, and monotone. Note that the uniform monotonicity derived in~\cref{thm:spop} directly yields that $S_i$ is coercive and monotone. Demicontinuity follows from the local Lipschitz bound in~\cref{thm:spop}. To see this, let $\eta^n\rightarrow \eta$ in $\Lambda$ and note that $\|\eta^n\|_{\Lambda}\leq C$. Therefore
        \begin{displaymath}
            \|S_i\eta^n-S_i\eta\|_{\Lambda^*}\leq L\bigl(\|\eta\|_\Lambda, \|\eta^n\|_\Lambda\bigr)^2\|\eta-\eta^n\|_\Lambda\leq C\|\eta-\eta^n\|_\Lambda\rightarrow 0,
\end{displaymath}
as $\eta^n$ tends to $\eta$. The same properties for $S$ follow since $S=S_1+S_2$. Hence, $S_i, S$ are all bijective.
    \end{proof}
    \section{Domain decomposition methods}\label{sec:methods}
    The domain decomposition methods can be equivalently formulated on the interface by writing $\eta^n=T_iu_i^{n+1}$, where $u_i^n$ approximates $u_i=\restr{u}{\Omega_i}$, the solution of~\cref{eq:weaktran}. The interface iterates $\eta^n$ then approximate the solution of~\cref{eq:sp}. The iterates of the domain decomposition methods can also be recovered as $u_i^{n+1}=F_i\eta^n$. The methods studied here have interface iterations of the form
    \begin{displaymath}
        \eta^{n+1}=\eta^n-sP^{-1}(S\eta^n).
    \end{displaymath}
    By choosing $P^{-1}=s_1S_1^{-1}+s_2S_2^{-1}$ for some parameters $s_1, s_2>0$ we get the standard nonlinear Neumann--Neumann method. We now propose modified Neumann--Neumann methods based on linear preconditioners $P$. We suggest two choices for the operator $P$ that correspond to the methods~\cref{eq:mnn1strong,eq:mnn2strong}.
    \begin{remark}
        The introduction of a third parameter $s$ is redundant since we could replace $s_1$ and $s_2$ by $\Tilde{s}_1=ss_1$ and $\Tilde{s}_2=ss_2$, respectively. However, this simplifies the notation of the proofs. 
    \end{remark} 
  
    \begin{method}\label{method:1}
    The iteration~\cref{eq:mnn1strong} is equivalent to the interface iteration
    \begin{equation}\label{eq:method1sp}
        \eta^{n+1}=\eta^n-s P^{-1}S\eta^n,
    \end{equation}
    with the operator $P:\Lambda\rightarrow \Lambda^*$ defined as
    \begin{displaymath}
        P^{-1}=s_1P_1^{-1}+s_2P_2^{-1}.
    \end{displaymath}
    Here, $s_1, s_2>0$ are method parameters and $P_i$ is the Steklov--Poincaré operator corresponding to the Laplace equation on $\Omega_i$. That is, if $\hat{F}_i:\eta\mapsto u$ denotes the solution operator to the problem
    \begin{equation*}
	\left\{\begin{aligned}
	     \langle\hat{A}_iu, v\rangle=\int_{\Omega_i} \nabla u\cdot\nabla v\mathrm{d}x&=0\quad\textrm{for all }v\in V_i^0,\\
	     T_iu&=\eta,
	\end{aligned}\right.
    \end{equation*}
    \end{method}
    then
    \begin{displaymath}
        \langle P_i\eta, \mu\rangle_{\Lambda^*\times\Lambda} =\langle \hat{A}_i\hat{F}_i\eta, R_i\mu\rangle_{V_i^*\times V_i}.
    \end{displaymath}
    
    \begin{method}\label{method:2}
    The iteration~\cref{eq:mnn2strong} is equivalent to the interface iteration
    \begin{equation}\label{eq:method2sp}
        \eta^{n+1}=\eta^n-s P(\eta^n)^{-1}S\eta^n,
    \end{equation}
    with the operator $P(\nu):\Lambda\rightarrow \Lambda^*$ defined as
    \begin{displaymath}
        P(\nu)^{-1}=s_1P_1(\nu)^{-1}+s_2P_2(\nu)^{-1}.
    \end{displaymath}
    Here, $s_1, s_2>0$ are method parameters and $P_i(\nu)$ is the Steklov--Poincaré operator corresponding to the linearization of~\cref{eq:weaki} at $w_i=F_i\nu$. That is, if $\hat{F}_i(\nu):\eta\mapsto u$ denotes the solution operator to the problem
    \begin{equation*}
	\left\{\begin{aligned}
	     \langle\hat{A}_i(w_i)u, v\rangle=\int_{\Omega_i} J_\alpha(w_i)\nabla u\cdot\nabla v
        +J_\beta(w_i)uv\mathrm{d}x&=0\quad\textrm{for all }v\in V_i^0,\\
	     T_iu&=\eta,
	\end{aligned}\right.
    \end{equation*}
    then
    \begin{displaymath}
        \langle P_i(\nu)\eta, \mu\rangle_{\Lambda^*\times\Lambda} =\bigl\langle \hat{A}_i\bigl(F_i(\nu)\bigr)\hat{F}_i(\nu)\eta, R_i\mu\bigr\rangle_{V_i^*\times V_i}.
    \end{displaymath}
    \end{method}
    \section{Convergence analysis for MNN1}\label{sec:conv1}
    We will now prove convergence of~\cref{method:1}. We first require the following lemma.
    \begin{lemma}\label{lemma:Pcoer}
        Let $P_1$ and $P_2$ be linear operators from a (real) Hilbert space $X$ into $X^*$ that are bounded, uniformly monotone, and symmetric. Then $P_1$, $P_2$, and
        \begin{displaymath}
            s_1P_1^{-1}+s_2P_2^{-1}
        \end{displaymath}
        are bijective. Moreover, the linear operator $P=(s_1P_1^{-1}+s_2P_2^{-1})^{-1}$ is also a bounded, uniformly monotone, and symmetric.
    \end{lemma}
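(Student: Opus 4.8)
The plan is to apply the Lax--Milgram theorem twice: once to each $P_i$ to obtain bijectivity, and once to the sum $Q := s_1 P_1^{-1} + s_2 P_2^{-1}$, viewed as an operator on the dual Hilbert space $X^*$. The three structural properties (boundedness, uniform monotonicity, symmetry) are preserved under inversion and under positive linear combinations, so the whole statement reduces to transferring these properties back and forth between an operator and its inverse, and keeping track of which space each object lives on.

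First I would establish bijectivity of each $P_i$. Since $P_i:X\to X^*$ is linear, bounded, and satisfies $\langle P_i x, x\rangle\geq c\|x\|_X^2$ by the equivalent form of uniform monotonicity for linear operators recorded in~\cref{sec:prel}, the Lax--Milgram theorem yields that $P_i$ is bijective with bounded inverse $P_i^{-1}:X^*\to X$ satisfying $\|P_i^{-1}\|\leq 1/c$. Next I would show that $P_i^{-1}$ inherits the three properties, now as an operator $X^*\to X$ equipped with the pairing $\langle f, P_i^{-1}f\rangle_{X^*\times X}$. Writing $x=P_i^{-1}f$ and $y=P_i^{-1}g$ so that $f=P_i x$ and $g=P_i y$, the symmetry of $P_i$ gives $\langle f, P_i^{-1}g\rangle=\langle P_i x, y\rangle=\langle P_i y, x\rangle=\langle g, P_i^{-1}f\rangle$, hence $P_i^{-1}$ is symmetric; uniform monotonicity follows from
\[
\langle f, P_i^{-1}f\rangle=\langle P_i x, x\rangle\geq c\|x\|_X^2\geq\frac{c}{C^2}\|f\|_{X^*}^2,
\]
where $C$ bounds $P_i$ and we used $\|f\|_{X^*}=\|P_i x\|_{X^*}\leq C\|x\|_X$. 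Boundedness is immediate. Since $s_1,s_2>0$, the sum $Q=s_1 P_1^{-1}+s_2 P_2^{-1}$ is again bounded, uniformly monotone, and symmetric.

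It then remains to invert $Q$. Because $X$ is a Hilbert space it is reflexive, so identifying $X$ with $(X^*)^*$ through the canonical isomorphism lets me regard $Q$ as a bounded, coercive, symmetric operator from the Hilbert space $X^*$ into its own dual, with the coercivity $\langle f, Qf\rangle\geq c_Q\|f\|_{X^*}^2$ coming from the previous step. A second application of Lax--Milgram shows that $Q$ is bijective, and hence $P=Q^{-1}:X\to X^*$ is well defined and bounded. Finally, the exact same transfer argument as above---writing $f=Px$ and $g=Py$ so that $x=Qf$ and $y=Qg$, and using the symmetry, uniform monotonicity, and boundedness of $Q$---shows that $P$ is symmetric, uniformly monotone, and bounded, which completes the proof.

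The only genuine subtlety is the bookkeeping: one must track which space each operator acts on and which dual pairing is used, and in particular invoke the reflexivity $X^{**}\cong X$ in order to place $Q$ within the Lax--Milgram framework on $X^*$. Everything else is a mechanical verification that coercivity constants transform as $c\mapsto c/C^2$ under inversion and are preserved under taking the positive combination $s_1 P_1^{-1}+s_2 P_2^{-1}$.
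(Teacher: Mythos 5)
Your proof is correct, but it takes a genuinely different route to the bijectivity of $s_1P_1^{-1}+s_2P_2^{-1}$ than the paper does. The paper applies Lax--Milgram to $P_1$, $P_2$, and the combination $s_2P_1+s_1P_2$, and then simply exhibits an explicit inverse: it verifies algebraically that $P_2(s_2P_1+s_1P_2)^{-1}P_1$ is a two-sided inverse of $s_1P_1^{-1}+s_2P_2^{-1}=P_1^{-1}(s_2P_1+s_1P_2)P_2^{-1}$, so bijectivity and boundedness of $P$ come for free, with no coercivity argument on the dual space; the uniform monotonicity and symmetry of $P$ are then obtained by direct computations that are essentially the same transfer identities you use (in particular, the paper proves symmetry of $P_i^{-1}$ exactly as you do). You instead prove a reusable transfer principle---the inverse of a bounded, coercive, symmetric operator is again bounded, coercive, symmetric, with constant degrading as $c\mapsto c/C^2$---observe that positive combinations preserve these properties, and then invoke Lax--Milgram a second time on the dual Hilbert space $X^*$, which forces the reflexivity bookkeeping: $Q=s_1P_1^{-1}+s_2P_2^{-1}$ must be viewed as a map from $X^*$ into $(X^*)^*\cong X$ through the canonical embedding, a step the paper never needs. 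Your route is more modular and generalizes verbatim to any finite positive combination $\sum_k s_kP_k^{-1}$ (relevant for decompositions into more than two families of subdomains), and it makes the coercivity constant of $P$ explicit; the paper's route is shorter, avoids reflexivity, and yields a closed-form expression for $P$ itself. Both arguments are complete and correct.
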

    \begin{proof}
        First note that $P_1$, $P_2$, and $s_1P_1+s_2P_2$ all have bounded inverses by the Lax--Milgram lemma. We verify that $P_2(s_2P_1+s_1P_2)^{-1}P_1$ is a left inverse to $s_1P_1^{-1}+s_2P_2^{-1}$ by
        \begin{align*}
            &\bigl(P_2(s_2P_1+s_1P_2)^{-1}P_1\bigr)\bigl(s_1P_1^{-1}+s_2P_2^{-1}\bigr)\\
            &\quad=\bigl(P_2(s_2P_1+s_1P_2)^{-1}P_1\bigr)\bigl(P_1^{-1}(s_2P_1+s_1P_2)P_2^{-1}\bigr)=I
        \end{align*}
and similarly one can show that it is a right inverse. It follows immediately that the inverse, i.e., $P$, is bounded. To show that $P$ is uniformly monotone, let $\eta\in X$, $\sigma=P\eta$, and $\lambda_i=P_i^{-1}\sigma$. As $P_i$ are uniformly monotone and bounded, and $P^{-1}=s_1P_1^{-1}+s_2P_2^{-1}$ is bounded, one has
        \begin{align*}
            \langle P\eta, \eta\rangle&=\bigl\langle \sigma,P^{-1}\sigma\bigr\rangle=\bigl\langle \sigma,(s_1P_1^{-1}+s_2P_2^{-1})\sigma\bigr\rangle\\
            &=s_1\langle P_1\lambda_1,\lambda_1\rangle+s_2\langle P_2\lambda_2,\lambda_2\rangle\\
            &\geq c\bigl(\|\lambda_1\|_X^2+\|\lambda_2\|_X^2\bigr)\\
            &\geq c\bigl(\|P_1\lambda_1\|_{X^*}^2+\|P_2\lambda_2\|_{X^*}^2\bigr)=c\|\sigma\|_{X^*}^2\\
            &\geq c\|P^{-1}\sigma\|_X^2=c\|\eta\|_X^2.
\end{align*}
Next, since $P_i$ is symmetric, we have for all $\sigma,\rho\in X^*$ that
        \begin{displaymath}
            \langle \sigma, P_i^{-1}\rho\rangle=\langle P_iP_i^{-1}\sigma, P_i^{-1}\rho\rangle=\langle P_iP_i^{-1}\rho, P_i^{-1}\sigma\rangle=\langle \rho, P_i^{-1}\sigma\rangle.
        \end{displaymath}
        The fact that $P$ is symmetric follows from the above, as
        \begin{align*}
            \langle P\eta, \mu\rangle&=\bigl\langle P\eta, P^{-1}P\mu\bigr\rangle\\
            &=s_1\bigl\langle P\eta, P_1^{-1}P\mu\bigr\rangle+s_2\bigl\langle P\eta, P_2^{-1}P\mu\bigr\rangle\\
            &=s_1\bigl\langle P\mu, P_1^{-1}P\eta\bigr\rangle+s_2\bigl\langle P\mu, P_2^{-1}P\eta\bigr\rangle\\
            &=\bigl\langle P\mu,P^{-1}P\eta\rangle=\langle P\mu,\eta\rangle.
        \end{align*}
    \end{proof}
    \begin{theorem}\label{thm:conv1}
        Let $X$ be a (real) Hilbert space and $G: X\rightarrow X^*$ a nonlinear operator that is uniformly monotone and satisfies the local Lipschitz condition
        \begin{displaymath}
            \| G\mu-G\lambda\|_{X^*}\leq L(\|\mu\|_X, \|\lambda\|_X)\|\mu-\lambda\|_X,
        \end{displaymath}
        where $L(\|\mu\|_X, \|\lambda\|_X)$ is bounded for bounded $\|\mu\|_X, \|\lambda\|_X$. Then for every $\chi\in X^*$ there exists a unique solution $\eta\in X$ to $G\eta=\chi$.
        
        Moreover, let $P:X\rightarrow X^*$ be a linear, bounded, uniformly monotone, and symmetric operator
and let $s>0$ be small enough. The iteration
        \begin{align*}
            \eta^{n+1}=\eta^n+sP^{-1}(\chi-G\eta^n)
        \end{align*}
        then converges to $\eta$ and satisfies the linear error estimate
        \begin{displaymath}
        	   \|\eta^n-\eta\|_X\leq C L^n\|\eta^0-\eta\|_X
        \end{displaymath}
        for any $\eta^0\in X$.
    \end{theorem}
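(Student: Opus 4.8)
The plan is to treat the two assertions separately, first establishing well-posedness of $G\eta=\chi$ and then the linear convergence of the iteration in the energy norm induced by $P$.

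For the existence and uniqueness statement I would argue exactly as in the proof of \cref{cor:spsol}, applying the Browder--Minty theorem to the shifted operator $\tilde G=G-\chi$. Subtracting the constant $\chi$ leaves the uniform monotonicity untouched, since it only involves differences $\tilde G\mu-\tilde G\lambda=G\mu-G\lambda$; hence $\tilde G$ is monotone, and it is coercive because $\langle\tilde G\eta,\eta\rangle/\|\eta\|_X\ge \langle G\eta,\eta\rangle/\|\eta\|_X-\|\chi\|_{X^*}\to\infty$. Demicontinuity follows from the local Lipschitz bound, again as in \cref{cor:spsol}. Browder--Minty then yields a solution of $\tilde G\eta=0$, i.e.\ $G\eta=\chi$, and uniqueness is immediate from uniform monotonicity.

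For the convergence I would work with the bilinear form $(\mu,\lambda)_P=\langle P\mu,\lambda\rangle$, which is a genuine inner product because $P$ is symmetric and uniformly monotone; the induced norm $\|\mu\|_P=\langle P\mu,\mu\rangle^{1/2}$ is equivalent to $\|\cdot\|_X$, with $c\|\mu\|_X^2\le\|\mu\|_P^2\le C\|\mu\|_X^2$ coming from the coercivity and boundedness of $P$ (and $P^{-1}$ is bounded by Lax--Milgram). Writing $e^n=\eta^n-\eta$ and using $G\eta=\chi$, the iteration gives the error recursion $e^{n+1}=e^n-sP^{-1}(G\eta^n-G\eta)$, so that $Pe^{n+1}=Pe^n-s(G\eta^n-G\eta)$. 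Expanding $\|e^{n+1}\|_P^2=\langle Pe^{n+1},e^{n+1}\rangle$ and using the symmetry of $P$ to collapse the two cross terms into a single term $-2s\langle G\eta^n-G\eta,e^n\rangle$, I obtain
\[
\|e^{n+1}\|_P^2=\|e^n\|_P^2-2s\langle G\eta^n-G\eta,e^n\rangle+s^2\langle G\eta^n-G\eta,P^{-1}(G\eta^n-G\eta)\rangle .
\]
Uniform monotonicity of $G$ bounds the middle term below by $2sc_G\|e^n\|_X^2$, while boundedness of $P^{-1}$ together with the local Lipschitz bound controls the last term above by $s^2\|P^{-1}\|\,L(\|\eta^n\|_X,\|\eta\|_X)^2\|e^n\|_X^2$.

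The main obstacle is that the Lipschitz factor is only local, so the contraction coefficient depends on $\|\eta^n\|_X$, which is not controlled a priori. I would resolve this circularity by a simultaneous induction in the $P$-norm: set $\rho=\|\eta\|_X+c^{-1/2}\|e^0\|_P$, let $\bar L$ bound $L$ on the ball of radius $\rho$, and choose $s$ small enough that $\theta:=1-(2sc_G-s^2\|P^{-1}\|\bar L^2)/C$ lies in $(0,1)$. Assuming inductively that $\|e^k\|_P\le\|e^0\|_P$ for $k\le n$ forces $\|\eta^n\|_X\le\|\eta\|_X+c^{-1/2}\|e^n\|_P\le\rho$, hence $L(\|\eta^n\|_X,\|\eta\|_X)\le\bar L$; inserting this together with the equivalence $\|e^n\|_X^2\ge C^{-1}\|e^n\|_P^2$ into the identity above yields $\|e^{n+1}\|_P^2\le\theta\|e^n\|_P^2$, which both advances the induction (as $\theta<1$) and gives geometric decay. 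Unwinding to the $X$-norm, $\|e^n\|_X\le c^{-1/2}\theta^{n/2}\|e^0\|_P\le\sqrt{C/c}\,\theta^{n/2}\|e^0\|_X$, the claimed estimate with rate $L=\theta^{1/2}<1$. The only delicate points are keeping the smallness threshold for $s$ consistent with the ball radius $\rho$ (which may depend on $\eta^0$) and verifying that $\theta$ can indeed be placed strictly inside $(0,1)$ for all sufficiently small $s$, which holds because $2sc_G-s^2\|P^{-1}\|\bar L^2$ is positive and tends to $0$ as $s\to 0$.
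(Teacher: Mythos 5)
Your proposal is correct and takes essentially the same route as the paper: both expand $\|\eta^{n+1}-\eta\|_{P}^{2}$ in the inner product induced by $P$, use the symmetry of $P$ to merge the two cross terms into $-2s\langle G\eta^{n}-G\eta,\eta^{n}-\eta\rangle$, bound the terms via the uniform monotonicity of $G$ and the boundedness of $P^{-1}$, and tame the local Lipschitz constant by confining all iterates to a ball around $\eta$ whose radius depends on $\eta^{0}$ (so that, in both arguments, the admissible $s$ depends on the initial guess). The only cosmetic difference is packaging: the paper proves that $K\mu=\mu+sP^{-1}(\chi-G\mu)$ is a contraction on an invariant ball $D_{r}$, whereas you run a direct induction on the decay of $\|\eta^{n}-\eta\|_{P}$; the two formulations are interchangeable.
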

    \begin{proof}
        First observe that $G$ is a bijection by the Browder--Minty theorem; compare with the proof of~\cref{cor:spsol}. Hence, for every $\chi\in X^*$ there exists a unique solution $\eta\in X$ to $G\eta=\chi$. Next, we define the operator 
        \begin{displaymath}
        K\mu=\mu+sP^{-1}(\chi-G\mu)
        \end{displaymath}
        and write
        \begin{align*}
            &\langle PK\mu-PK\lambda, K\mu-K\lambda\rangle =\langle P\mu-P\lambda-s(G\mu-G\lambda),\mu-\lambda-sP^{-1}(G\mu-G\lambda)\rangle\\
            &\quad=\langle P\mu-P\lambda,\mu-\lambda\rangle+s^2\langle G\mu-G\lambda,P^{-1}(G\mu-G\lambda)\rangle\\
            &\qquad-s\Bigl(\langle P\mu-P\lambda,P^{-1}(G\mu-G\lambda)\rangle+\langle G\mu-G\lambda,\mu-\lambda\rangle\Bigr)\\
            &\quad=I_1+s^2I_2-sI_3.
        \end{align*}
        The second term $I_2$ is estimated using the continuity of $P^{-1}$ and $G$ and the uniform monotonicity of $P$, which yields that
        \begin{align*}
            I_2&=\langle G\mu-G\lambda,P^{-1}(G\mu-G\lambda)\rangle\\
            &\quad\leq L\bigl(\|\mu\|_X, \|\lambda\|_X\bigr) \|\mu-\lambda\|_X\|P^{-1}(G\mu-G\lambda)\|_X\\
            &\quad \leq C_1L\bigl(\|\mu\|_X, \|\lambda\|_X\bigr)^2\|\mu-\lambda\|_X^2.
        \end{align*}
        The third term $I_3$ is estimated using the symmetry of $P$ and the monotonicity of $G$, i.e.,
        \begin{align*}
            I_3=2\langle G\mu-G\lambda,\mu-\lambda\rangle\geq c\|\mu-\lambda\|_X^2.
        \end{align*}
        We define
        \begin{displaymath}
            (\mu, \lambda)_P=\langle P\mu, \lambda\rangle
        \end{displaymath}
        and note that this is an inner product which gives a norm $\|\cdot\|_P$ that is equivalent to $\|\cdot\|_X$. The latter follows as $P$ is bounded, uniformly monotone, and symmetric. Thus
        \begin{gather}
            \begin{aligned}\label{eq:Kcontr}
                \|K\mu-K\lambda\|_P&=\langle PK\mu-PK\lambda, K\mu-K\lambda\rangle\\
                &\leq \Bigl(1+s^2C_1L\bigl(\|\mu\|_X, \|\lambda\|_X\bigr)^2-sc\Bigr)\|\mu-\lambda\|_P^2.
            \end{aligned}
        \end{gather}
        For $r>0$ we define the ball
        \begin{displaymath}
            D_r=\{\mu\in X: \|\mu-\eta\|_P\leq r\}
        \end{displaymath}
        and then fix $r>0$ large enough that $\eta^0\in D_r$. Since $L\bigl(\|\mu\|_X, \|\lambda\|_X\bigr)$ is bounded on $D_r$ we can find $C_2(r)>0$ such that $C_1L(\|\mu\|_P, \|\lambda\|_P)^2\leq C_2(r)$ for all $\mu,\lambda\in D_r$. We can then choose $s>0$ small enough that $1+s^2C_2(r)-sc<1$. This implies that, for $\mu\in D_r$, we have
        \begin{displaymath}
            \|K\mu-\eta\|_P= \|K\mu-K\eta\|_P\leq (1+s^2C_2(r)-sc)\|\mu-\eta\|_P\leq r(1+s^2C_2(r)-sc)<r,
        \end{displaymath}
        which shows that $K\mu\in D_r$. Hence, by induction $\{\eta^n\}\subset D_r$ as $\eta^0\in D_r$. Moreover, according to~\cref{eq:Kcontr},
        \begin{displaymath}
            \|K\mu-K\lambda\|_P\leq L\|\mu-\lambda\|_P
        \end{displaymath}
        with $L<1$ for all $\mu, \lambda\in D_r$. Therefore, 
        \begin{align*}
            \|\eta^n-\eta\|_X&\leq C\|\eta^n-\eta\|_P=C\|K\eta^{n-1}-K\eta\|_P\leq CL\|\eta^{n-1}-\eta\|_P\\
            &\leq CL^n\|\eta^0-\eta\|_P\leq CL^n\|\eta^0-\eta\|_X,
        \end{align*}
        which tends to zero as $n$ tends to infinity.
    \end{proof}
Setting $(X,G,\chi)=(\Lambda,S,0)$ in this abstract result immediately yields the convergence of~\cref{method:1} for quasilinear problems, after noting that $P$ is bounded, uniformly monotone, and symmetric. Since $P_1$ and $P_2$ are the Steklov--Poincaré operators for the Laplace equation this follows from~\cite[Chapter 4]{quarteroni} and~\cref{lemma:Pcoer}. In particular, the symmetry follows since $P_i$ is independent of the choice of extension $R_i$ and can be written
\begin{displaymath}
    \langle P_i\eta, \mu\rangle =\langle \hat{A}_i\hat{F}_i\eta, R_i\mu\rangle =\langle \hat{A}_i\hat{F}_i\eta, \hat{F}_i\mu\rangle,
\end{displaymath}
compare with~\cref{rem:spind}. The fact that the method is well defined, i.e., the iteration has a unique solution at each step, is also a consequence of the above.
    \begin{corollary}\label{cor:mnn1conv}
        Let~\cref{ass:domain,ass:eq} hold and suppose that $s_1, s_2>0$ are small enough. Then the iterates $\eta^n$ of the interface iteration~\cref{eq:method1sp} converges linearly to the solution of~\cref{eq:sp} in $\Lambda$ for any $\eta^0\in\Lambda$. Moreover the iterates $(u_1^{n+1}, u_2^{n+1})=(F_1\eta^n, F_2\eta^n)$ of~\cref{method:1} converges linearly to the solution of~\cref{eq:weaktran} in $V_1\times V_2$.
    \end{corollary}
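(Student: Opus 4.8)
The plan is to obtain \cref{cor:mnn1conv} as a direct specialization of the abstract result in \cref{thm:conv1} with $(X,G,\chi)=(\Lambda,S,0)$, and then to upgrade the resulting convergence of the interface iterates $\eta^n$ to convergence of the subdomain iterates $u_i^{n+1}=F_i\eta^n$. Most of the hypotheses are already in hand: by \cref{thm:spop} the Steklov--Poincar\'e operator $S$ is uniformly monotone and satisfies the local Lipschitz bound with constant $L(\|\eta\|_\Lambda,\|\mu\|_\Lambda)^2$, and the polynomial growth of $L$ makes this constant bounded on bounded sets. Thus $G=S$ fulfils exactly the conditions placed on the nonlinear operator in \cref{thm:conv1}, whose first conclusion recovers the unique solvability of $S\eta=0$ already noted in \cref{cor:spsol}.

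First I would check that the preconditioner is admissible, i.e., that $P=(s_1P_1^{-1}+s_2P_2^{-1})^{-1}$ is linear, bounded, uniformly monotone, and symmetric. Since $P_1$ and $P_2$ are the Steklov--Poincar\'e operators of the Laplace equation on $\Omega_i$, the linear theory of~\cite[Chapter 4]{quarteroni} supplies that each $P_i$ is bounded, uniformly monotone, and symmetric; the symmetry is clearest from the extension-independent form $\langle P_i\eta,\mu\rangle=\langle\hat A_i\hat F_i\eta,\hat F_i\mu\rangle$. \Cref{lemma:Pcoer} then transfers all three properties to $P$. With $P$ admissible and $G=S$, the interface iteration \cref{eq:method1sp} is precisely the fixed-point scheme of \cref{thm:conv1} with $\chi=0$, so for the scaling parameter small enough the iterates $\eta^n$ converge linearly to the solution of \cref{eq:sp}. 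The smallness can be imposed on $s_1,s_2$ directly, since by the earlier observation that the parameter $s$ is redundant the combined coefficients $ss_1,ss_2$ may be taken small.

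Next I would transfer this to the subdomain iterates. Writing $(u_1,u_2)=(F_1\eta,F_2\eta)$ for the solution of the transmission problem \cref{eq:weaktran}, I would apply the local Lipschitz bound of \cref{lemma:Fi}. Because $\{\eta^n\}$ converges it is bounded in $\Lambda$, and the growth estimate $\|F_i\eta^n\|_{V_i}\leq C(1+\|\eta^n\|_\Lambda)$ keeps $\{F_i\eta^n\}$ bounded in $V_i$; hence the Lipschitz constant $L(\|F_i\eta^n\|_{V_i},\|F_i\eta\|_{V_i})$ is uniformly bounded along the sequence. This gives $\|u_i^{n+1}-u_i\|_{V_i}\leq C\|\eta^n-\eta\|_\Lambda$, which together with the linear estimate $\|\eta^n-\eta\|_\Lambda\leq CL^n\|\eta^0-\eta\|_\Lambda$ yields linear convergence of $(u_1^{n+1},u_2^{n+1})$ in $V_1\times V_2$.

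Since every step relies on results already established, I do not anticipate a genuine difficulty; the corollary is essentially bookkeeping. The one point needing care is the verification that the Laplace Steklov--Poincar\'e operators $P_i$ are uniformly monotone and symmetric, as this is what makes \cref{lemma:Pcoer} applicable and hence guarantees the admissibility of $P$ in \cref{thm:conv1}. A minor secondary point is the uniform boundedness of the Lipschitz constants of $F_i$ along the iteration, which follows at once from the boundedness of the convergent sequence $\{\eta^n\}$ and the growth bound on $F_i$.
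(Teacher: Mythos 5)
Your proposal is correct and takes essentially the same route as the paper: the paper also obtains the corollary by specializing \cref{thm:conv1} to $(X,G,\chi)=(\Lambda,S,0)$, using \cref{thm:spop} for the properties of $S$, the linear theory of~\cite[Chapter 4]{quarteroni} together with the extension-independent representation $\langle P_i\eta,\mu\rangle=\langle\hat A_i\hat F_i\eta,\hat F_i\mu\rangle$ for the symmetry and monotonicity of $P_1,P_2$, and \cref{lemma:Pcoer} to pass these properties to $P$. Your final step, transferring the linear convergence of $\eta^n$ to the subdomain iterates $(F_1\eta^n,F_2\eta^n)$ via the local Lipschitz bound of \cref{lemma:Fi}, merely makes explicit a point the paper leaves implicit, and it is carried out correctly.
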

    \section{Convergence analysis for MNN2}\label{sec:conv2}
    We will now prove convergence of our second method for semilinear problems. We will use the following abstract convergence result.
    \begin{theorem}\label{thm:conv2}
        Let $X$ be a (real) Hilbert space $G: X\rightarrow X^*$ a nonlinear operator that is uniformly monotone and satisfies the local Lipschitz condition
        \begin{displaymath}
            \| G\mu-G\lambda\|_{X^*}\leq L(\|\mu\|_X, \|\lambda\|_X)\|\mu-\lambda\|_X,
        \end{displaymath}
        where $L(\|\mu\|_X, \|\lambda\|_X)$ is bounded for bounded $\|\mu\|_X, \|\lambda\|_X$. Then for every $\chi\in X^*$ there exists a unique solution $\eta\in X$ to $G\eta=\chi$.
        
        Moreover, for any $\nu\in X$ let $P(\nu):X\rightarrow X^*$ be a symmetric operator such that the family of operators $P(\cdot)$ satisfy
        \begin{equation*}
            \begin{aligned}
                \|P(\nu)\mu\|_{X^*}&\leq C_1(\nu)\|\mu\|_X\qquad &\text{for all }\nu,\mu\in X,\\
                \langle P(\nu)\mu, \mu\rangle&\geq c\|\mu\|_X^2\qquad &\text{for all }\nu,\mu\in X,\\
                \bigl\|\bigl(P(\nu)-P(\mu)\bigr)\lambda\bigr\|_{X^*}&\leq C_2\bigl(\|\nu\|_X, \|\mu\|_X\bigr) \|\nu-\mu\|_X\|\lambda\|_X\qquad &\text{for all }\nu,\mu,\lambda\in X,
            \end{aligned}
        \end{equation*}
        where $C_1(\|\mu\|_X), C_2(\|\nu\|_X, \|\mu\|_X)$ are bounded for bounded $\|\nu\|_X$, $\|\mu\|_X$. In particular, $P(\nu)$ is bounded and uniformly monotone for all $\nu\in X$. Assume also that $\eta^0$ is close enough to $\eta$ and that $s>0$ is small enough. Then the iteration
        \begin{align*}
            \eta^{n+1}=\eta^n+sP(\eta^n)^{-1}(\chi-G\eta^n)
        \end{align*}
        converges to $\eta$ and satisfies the linear error estimate
        \begin{displaymath}
        	   \|\eta^n-\eta\|_X\leq C L^n\|\eta^0-\eta\|_X.
        \end{displaymath}
    \end{theorem}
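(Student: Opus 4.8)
The first assertion, existence and uniqueness of the solution to $G\eta=\chi$, is identical to the corresponding step in \cref{thm:conv1}: the operator $G$ is demicontinuous, coercive, and monotone, so the Browder--Minty theorem applies exactly as in the proof of \cref{cor:spsol}. I therefore concentrate on the convergence of the iteration. Writing $e_n=\eta^n-\eta$ and using $G\eta=\chi$, so that $\chi-G\eta^n=G\eta-G\eta^n$, the iteration becomes
\begin{displaymath}
    e_{n+1}=e_n-sP(\eta^n)^{-1}(G\eta^n-G\eta).
\end{displaymath}
The central idea is to freeze the preconditioner at the exact solution and view the genuine iteration as a perturbation of the constant-preconditioner iteration governed by $P(\eta)$. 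Splitting
\begin{displaymath}
    P(\eta^n)^{-1}(G\eta^n-G\eta)=P(\eta)^{-1}(G\eta^n-G\eta)+\bigl(P(\eta^n)^{-1}-P(\eta)^{-1}\bigr)(G\eta^n-G\eta),
\end{displaymath}
I would write $e_{n+1}=(\tilde\eta^{n+1}-\eta)-sr_n$, where $\tilde\eta^{n+1}=\eta^n+sP(\eta)^{-1}(\chi-G\eta^n)$ is one step of the iteration from \cref{thm:conv1} with the \emph{fixed} operator $P=P(\eta)$, and $r_n=\bigl(P(\eta^n)^{-1}-P(\eta)^{-1}\bigr)(G\eta^n-G\eta)$ is the preconditioner perturbation.

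Since $P(\eta)$ is bounded, uniformly monotone, and symmetric, the one-step estimate \eqref{eq:Kcontr} established in the proof of \cref{thm:conv1} yields $\|\tilde\eta^{n+1}-\eta\|_{P(\eta)}\leq L_s\|e_n\|_{P(\eta)}$ with $L_s=(1-sc+s^2C)^{1/2}<1$ for $s$ small, where $\|\cdot\|_{P(\eta)}$ denotes the norm induced by $P(\eta)$, which is equivalent to $\|\cdot\|_X$. The key step is then to show that the perturbation $r_n$ is quadratically small in the error. Using the resolvent identity $P(\eta^n)^{-1}-P(\eta)^{-1}=P(\eta^n)^{-1}\bigl(P(\eta)-P(\eta^n)\bigr)P(\eta)^{-1}$, the uniform bound $\|P(\nu)^{-1}\|_{X^*\to X}\leq 1/c$ (immediate from the uniform monotonicity), the Lipschitz assumption on $P(\cdot)$, and the local Lipschitz bound on $G$, I obtain
\begin{displaymath}
    \|r_n\|_X\leq\frac{1}{c^2}C_2\bigl(\|\eta^n\|_X,\|\eta\|_X\bigr)\|e_n\|_X\,\|G\eta^n-G\eta\|_{X^*}\leq\frac{C_2L}{c^2}\|e_n\|_X^2.
\end{displaymath}

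Passing to the equivalent norm $\|\cdot\|_{P(\eta)}$ and combining with the contraction of $\tilde\eta^{n+1}$ gives, for iterates in a fixed ball $D_r=\{\mu:\|\mu-\eta\|_{P(\eta)}\leq r\}$ on which $C_2$ and $L$ are bounded, a recursion of the form
\begin{displaymath}
    \|e_{n+1}\|_{P(\eta)}\leq L_s\|e_n\|_{P(\eta)}+sM\|e_n\|_{P(\eta)}^2=\bigl(L_s+sM\|e_n\|_{P(\eta)}\bigr)\|e_n\|_{P(\eta)},
\end{displaymath}
with $M$ depending only on $r$ and the structural constants. To close the argument I would fix $s$ small enough that $L_s<1$ and then require $\eta^0$ close enough to $\eta$ that $\theta:=L_s+sM\|e_0\|_{P(\eta)}<1$. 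Since the factor $L_s+sM\|e_n\|_{P(\eta)}$ only decreases as long as $\|e_n\|_{P(\eta)}$ does, a straightforward induction shows $\eta^n\in D_r$ and $\|e_{n+1}\|_{P(\eta)}\leq\theta\|e_n\|_{P(\eta)}$ for all $n$, whence $\|e_n\|_{P(\eta)}\leq\theta^n\|e_0\|_{P(\eta)}$ and, by norm equivalence, the claimed linear estimate in $\|\cdot\|_X$.

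I expect the main obstacle to be precisely the variable preconditioner: unlike in \cref{thm:conv1}, the natural norm $\|\cdot\|_{P(\eta^n)}$ changes at every step, and simply converting between these norms loses a multiplicative constant that cannot be beaten by the $\mathcal{O}(s)$ contraction. The resolution, and the reason convergence is only local here, is the observation that the discrepancy $r_n$ between $P(\eta^n)^{-1}$ and $P(\eta)^{-1}$ is of higher order $\mathcal{O}(\|e_n\|_X^2)$; since both the shortfall $1-L_s$ and the perturbation coefficient scale like $s$, the quadratic term can be absorbed once $\eta^0$ lies in a sufficiently small ball around $\eta$.
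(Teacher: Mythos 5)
Your proposal is correct, but it proves the result by a genuinely different mechanism than the paper. You freeze the preconditioner at the exact solution, write one step of the true iteration as one step of the fixed-preconditioner iteration of \cref{thm:conv1} plus the perturbation $r_n=\bigl(P(\eta^n)^{-1}-P(\eta)^{-1}\bigr)(G\eta^n-G\eta)$, and then use the resolvent identity, the uniform bound $\|P(\nu)^{-1}\|_{X^*\to X}\leq 1/c$, and the Lipschitz continuity of $\nu\mapsto P(\nu)$ to show $\|r_n\|_X=\mathcal{O}(\|e_n\|_X^2)$; a linear-plus-quadratic recursion then closes the argument for $\eta^0$ close to $\eta$. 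The paper never forms this additive splitting: it works with the whole family of norms $\|\cdot\|_{P(\nu)}$, shows that the map $K(\nu)\mu=\mu+sP(\nu)^{-1}(\chi-G\mu)$ contracts in its \emph{own} norm $\|\cdot\|_{P(\nu)}$ uniformly in $\nu$ (the same computation as in \cref{thm:conv1}), and then uses the Lipschitz assumption on $P(\cdot)$ to show that for $\nu$ in a ball $D_r$ around $\eta$ with $r$ small, the norms $\|\cdot\|_{P(\nu)}$ and $\|\cdot\|_{P(\eta)}$ are equivalent with constants so close to $1$ that the contraction survives the change of norm; induction in the fixed norm $\|\cdot\|_{P(\eta)}$ then gives the estimate. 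This also corrects your closing remark: the multiplicative loss from switching norms \emph{can} be beaten, not by the $\mathcal{O}(s)$ contraction margin at fixed $r$, but by shrinking $r$, since the equivalence constants tend to $1$ as $r\to 0$ — that is exactly the paper's resolution, and it is where the locality of the convergence enters there, just as the quadratic perturbation term is where it enters in yours. Your route buys modularity (it reuses \cref{thm:conv1} essentially as a black box and isolates the preconditioner variation as a second-order effect), at the cost of manipulating inverses via the resolvent identity; the paper's route stays entirely at the level of the bilinear forms and avoids any triangle-inequality splitting, but has to manage the interplay of infinitely many equivalent norms. Both hinge on the same third structural assumption on $P(\cdot)$ and both are inherently local.
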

    \begin{proof}
        The fact that $G\eta=\chi$ has a unique solution has already been proven in~\cref{thm:conv2}. We define the operator $K(\nu)\mu=\mu+sP(\nu)^{-1}(\chi-G\mu)$ and compute
        \begin{align*}
            &\langle P(\nu)K(\nu)\mu-P(\nu)K(\nu)\lambda, K(\nu)\mu-K(\nu)\lambda\rangle\\
            &\quad=\bigl\langle P(\nu)\mu-P(\nu)\lambda-s(G\mu-G\lambda),\mu-\lambda-s\bigl(P(\nu)^{-1}(G\mu-G\lambda)\bigr)\bigr\rangle\\
            &\quad=\langle P(\nu)\mu-P(\nu)\lambda,\mu-\lambda\rangle+s^2\langle G\mu-G\lambda,P(\nu)^{-1}(G\mu-G\lambda)\rangle\\
            &\qquad-s\Bigl(\langle P(\nu)\mu-P(\nu)\lambda,P(\nu)^{-1}(G\mu-G\lambda)\rangle+\langle G\mu-G\lambda,\mu-\lambda\rangle\Bigr)\\
            &\quad=I_1+s^2I_2-sI_3.
        \end{align*}
        The second term $I_2$ is estimated using the continuity of $P(\nu)^{-1}$ and $G$ and the coercivity of $P$, which yields that
        \begin{align*}
            I_2&=\langle G\mu-G\lambda,P(\nu)^{-1}(G\mu-G\lambda)\rangle\\
            &\quad\leq L\bigl(\|\mu\|_X, \|\lambda\|_X\bigr) \|\mu-\lambda\|_X\|P(\nu)^{-1}(G\mu-G\lambda)\|_X\\
            &\quad\leq C_3L\bigl(\|\mu\|_X, \|\lambda\|_X\bigr)^2\|\mu-\lambda\|_X^2.
        \end{align*}
        The third term $I_3$ can be divided further using the symmetry of $P(\nu)$ and estimated using the monotonicity of $G$, i.e., 
        \begin{align*}
            I_3=2\langle G\mu-G\lambda,\mu-\lambda\rangle\geq c\|\mu-\lambda\|_X^2.
        \end{align*}
        
        We define the inner products
        \begin{displaymath}
            (\mu, \lambda)_{P(\nu)}=\langle P(\nu)\mu, \lambda\rangle
        \end{displaymath}
        and note that they yield norms $\|\cdot\|_{P(\nu)}$ that are equivalent to $\|\cdot\|_X$. Thus
        \begin{align*}
            \|K(\nu)\mu-K(\nu)\lambda\|_{P(\nu)}&=\langle P(\nu)K(\nu)\mu-P(\nu)K(\nu)\lambda, K\mu-K\lambda\rangle\\
            &\leq \Bigl(1+s^2C_3L\bigl(\|\mu\|_X, \|\lambda\|_X\bigr)^2-sc\Bigr)\|\mu-\lambda\|_{P(\nu)}^2.
        \end{align*}
        We define
        \begin{displaymath}
            D_r=\{\mu\in X: \|\mu-\eta\|_{P(\eta)}\leq r\}, 
        \end{displaymath}
        where $\eta$ is the unique solutionto $G\eta=\chi$. Let $R>0$. For $\mu,\lambda\in D_R$ we can find $C_4=C_4(R)>0$ such that $C_3L(\|\mu\|_X, \|\lambda\|_X)\leq C_4$ and then choose $s>0$ small enough such that $L:=1+s^2C_4-sc<1$. Moreover, we can find $C_5=C_5(R)>0$ such that for all $\mu\in X,\nu\in D_R$ we have
        \begin{displaymath}
            \frac{1}{C_5}\|\mu\|_X\leq\|\mu\|_{P(\nu)} \leq C_5\|\mu\|_X.
        \end{displaymath}
        Then we can choose $r$ small enough that $0<r<R$ and
        \begin{displaymath}
            \bigl\|\bigl(P(\eta)-P(\nu)\bigr)\mu\bigr\|_{X^*}\leq \frac{1-\sqrt{L}}{\sqrt{L}C_5^2}\|\mu\|_X.
        \end{displaymath}
        for $\nu\in D_r, \mu\in X$. It follows that for $\nu\in D_r,\mu\in X$ we have 
        \begin{align*}
            \|\mu\|_{P(\eta)}^2=\|\mu\|_{P(\nu)}^2+\langle P(\eta)\mu-P(\nu)\mu,\mu\rangle\leq \Bigl(1+\frac{1-\sqrt{L}}{\sqrt{L}}\Bigr)\|\mu\|_{P(\nu)}^2\leq\frac{1}{\sqrt{L}}\|\mu\|_{P(\nu)}^2,\\
            \|\mu\|_{P(\nu)}^2=\|\mu\|_{P(\eta)}^2+\langle P(\nu)\mu-P(\eta)\mu,\mu\rangle\leq \Bigl(1+\frac{1-\sqrt{L}}{\sqrt{L}}\Bigr)\|\mu\|_{P(\eta)}^2\leq\frac{1}{\sqrt{L}}\|\mu\|_{P(\eta)}^2.
        \end{align*}
        For $\mu,\nu\in D_r$ we also have that
        \begin{align*}
            \|K(\nu)\mu-\eta\|_{P(\eta)}&= \|K(\nu)\mu-K(\nu)\eta\|_{P(\eta)}\leq \frac{1}{L^{1/4}}\|K(\nu)\mu-K(\nu)\eta\|_{P(\nu)}\\
            &\leq \frac{1+s^2C_4-sc}{L^{1/4}}\|\mu-\eta\|_{P(\nu)}\leq \frac{1+s^2C_4-sc}{\sqrt{L}}\|\mu-\eta\|_{P(\eta)}\\
            &\leq  \sqrt{L}r<r,
        \end{align*}
        which shows that $K(\nu)\mu\in D_r$. Moreover, for all $\mu, \lambda,\nu\in D_r$
        \begin{align*}
            \|K(\nu)\mu-K(\nu)\lambda\|_{P(\eta)}&\leq \frac{1}{L^{1/4}}\|K(\nu)\mu-K(\nu)\lambda\|_{P(\nu)}\\
            &\leq L^{3/4}\|\mu-\lambda\|_{P(\nu)}\leq \sqrt{L}\|\mu-\lambda\|_{P(\eta)}
        \end{align*}
        with $L<1$. 
        
        Since $\eta^{n+1}=K(\eta^n)\eta^n$ and $\eta^0\in D_r$, as $\eta^0$ is assumed to be sufficiently close to~$\eta$, we have by induction that $\{\eta^n\}\subset D_r$. Moreover,
        \begin{align*}
            \|\eta^n-\eta\|_X&\leq C\|\eta^n-\eta\|_{P(\eta)}=C\|K(\eta^{n-1})\eta^{n-1}-K(\eta^{n-1})\eta\|_{P(\eta)}\\
            &\leq C\sqrt{L}\|\eta^{n-1}-\eta\|_{P(\eta)}\\
            &\leq CL^{n/2}\|\eta^0-\eta\|_{P(\eta)}\leq CL^{n/2}\|\eta^0-\eta\|_X,
        \end{align*}
        which tends to zero as $n$ tends to infinity.
    \end{proof}
    \begin{corollary}\label{cor:mnn2conv}
        Let~\cref{ass:domain,ass:eq,ass:eqjacobian} hold and suppose that $s_1, s_2>0$ are small enough and that $\eta^0$ is close enough to $\eta$, the solution of~\cref{eq:sp}. Then the iterates $\{\eta^n\}$ of the interface iteration~\cref{eq:method2sp} converges linearly to $\eta$ in $\Lambda$. Moreover the iterates $(u_1^{n+1}, u_2^{n+1})=(F_1\eta^n, F_2\eta^n)$ of~\cref{method:2} converges linearly to the solution of~\cref{eq:weaktran} in $V_1\times V_2$.
    \end{corollary}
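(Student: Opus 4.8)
The plan is to obtain \cref{cor:mnn2conv} as a direct application of the abstract result \cref{thm:conv2} with the choice $(X, G, \chi) = (\Lambda, S, 0)$. By \cref{thm:spop} the Steklov--Poincar\'e operator $S$ is uniformly monotone and satisfies the required local Lipschitz bound with $L$ bounded on bounded sets, so the hypotheses on $G$ hold immediately and $S\eta = 0$ has the unique solution $\eta$. The bulk of the work is therefore to verify that the family of linear preconditioners $P(\nu) = (s_1 P_1(\nu)^{-1} + s_2 P_2(\nu)^{-1})^{-1}$ satisfies the three structural estimates of \cref{thm:conv2}: uniform boundedness, uniform coercivity, and Lipschitz dependence on $\nu$.

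I would first establish these properties for the individual operators $P_i(\nu)$ and then transfer them to $P(\nu)$. As in \cref{rem:spind} and the discussion following \cref{thm:conv1}, $P_i(\nu)$ is independent of the choice of extension and admits the representation $\langle P_i(\nu)\eta, \mu\rangle = \langle \hat A_i(F_i\nu)\hat F_i(\nu)\eta, \hat F_i(\nu)\mu\rangle$; under \cref{ass:eqjacobian} the linearized form $\int_{\Omega_i} J_\alpha \nabla u \cdot \nabla v + J_\beta(x, F_i\nu) u v \,\mathrm{d}x$ is symmetric, so $P_i(\nu)$ is symmetric by the argument of \cref{rem:spind}. Boundedness follows from the bound $|J_\beta(x, y)| \le h_1(x) + L(y)$ together with the Sobolev embedding $V_i \hookrightarrow L^{p^*}(\Omega_i)$ exactly as in \cref{lemma:ai}, yielding a constant that depends on $\nu$ only through $\|F_i\nu\|_{V_i}$ and hence through $\|\nu\|_\Lambda$. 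Coercivity is the point where the semilinear restriction is essential: the pointwise bound $J_\alpha z' \cdot z' + J_\beta(x, y)(y')^2 \ge h_2(x)|z'|^2 - h_3(x)|y'|^2$ holds with $h_2, h_3$ independent of $y$, so combining it with the gap condition \cref{eq:monbound} and the Poincar\'e inequality gives coercivity of $\hat A_i(F_i\nu)$, and hence of $P_i(\nu)$, that is \emph{uniform} in $\nu$.

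The main obstacle is the Lipschitz dependence $\|(P_i(\nu) - P_i(\mu))\lambda\|_{\Lambda^*} \le C_2(\|\nu\|_\Lambda, \|\mu\|_\Lambda)\|\nu - \mu\|_\Lambda\|\lambda\|_\Lambda$. I would split the difference into two contributions. The first comes from the change of coefficient $J_\beta(x, F_i\nu) - J_\beta(x, F_i\mu)$, which by the second bound in \cref{ass:eqjacobian} is controlled by $\tilde L(F_i\nu, F_i\mu)|F_i\nu - F_i\mu|$; the growth bound $\tilde L(y, y') \le C(1 + |y|^{p^*-3} + |y'|^{p^*-3})$ is tailored so that a H\"older and Sobolev embedding argument, analogous to the three-term H\"older estimate in \cref{lemma:ai}, absorbs the extra factor, after which \cref{lemma:Fi} converts $\|F_i\nu - F_i\mu\|_{V_i}$ into $\|\nu - \mu\|_\Lambda$. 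The second contribution comes from the difference of linear solution operators $z = \hat F_i(\nu)\eta - \hat F_i(\mu)\eta \in V_i^0$; testing the defining equation against $z$ and using the uniform coercivity established above together with the coefficient estimate again bounds $\|z\|_{V_i}$ by $C\|F_i\nu - F_i\mu\|_{V_i}\|\eta\|_\Lambda$.

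Finally I would transfer these three properties from $P_i(\nu)$ to $P(\nu)$. Uniform coercivity, boundedness, and symmetry pass through the inverse-sum construction exactly as in \cref{lemma:Pcoer}, whose argument is purely algebraic and does not see the parameter $\nu$. For the Lipschitz property one writes the difference of inverses as $P_i(\nu)^{-1} - P_i(\mu)^{-1} = P_i(\nu)^{-1}(P_i(\mu) - P_i(\nu))P_i(\mu)^{-1}$ and uses the uniform bounds on the inverses, guaranteed by the uniform coercivity, to propagate the estimate first to $s_1 P_1(\nu)^{-1} + s_2 P_2(\nu)^{-1}$ and then, by the same inverse identity once more, to $P(\nu)$. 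With all hypotheses of \cref{thm:conv2} verified, linear convergence of $\{\eta^n\}$ in $\Lambda$ follows for $s_1, s_2$ small and $\eta^0$ close to $\eta$. The convergence of the subdomain iterates $(u_1^{n+1}, u_2^{n+1}) = (F_1\eta^n, F_2\eta^n)$ to the solution of \cref{eq:weaktran} in $V_1 \times V_2$ is then immediate from the local Lipschitz continuity of $F_i$ in \cref{lemma:Fi}.
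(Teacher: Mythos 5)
Your proposal is correct and takes essentially the same route as the paper: both apply \cref{thm:conv2} with $(X, G, \chi) = (\Lambda, S, 0)$ and verify the hypotheses on the preconditioner family by establishing boundedness, $\nu$-uniform coercivity, and Lipschitz dependence on $\nu$ for the linearized forms $\hat{A}_i$ under \cref{ass:eqjacobian} (three-term H\"older inequality, Sobolev embedding, \cref{lemma:Fi}), then transferring these properties to $P_i(\nu)$ via the extension-independence argument of \cref{rem:spind}. You are in fact more explicit than the paper in the two steps it compresses into ``arguing as in \cref{thm:spop}'' and leaves to \cref{lemma:Pcoer}, namely the energy estimate for the difference $\hat{F}_i(\nu)\eta - \hat{F}_i(\mu)\eta$ of linearized solution operators and the propagation of the Lipschitz-in-$\nu$ bound through the inverse-sum construction to $P(\nu)$.
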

    \begin{proof}
        The properties of $S, S_1, S_2$ are acquired in~\cref{thm:spop,cor:spsol} and it therefore only remains to show that the properties of $P(\nu)$ hold. We will show that the linear operator $\hat{A}_i(w):V_i\rightarrow V_i^*$
        \begin{displaymath}
            \langle\hat{A}_i(w)u, v\rangle=\int_{\Omega_i} J_\alpha(w)\nabla u\cdot\nabla v
            +J_\beta(w)uv\,\mathrm{d}x
        \end{displaymath}
        is bounded, locally Lipschitz continuous, and uniformly monotone independently of $w$.\\
        
        By~\cref{ass:eqjacobian} we have that
        \begin{align*}
            \bigl|\langle\hat{A}_i(w)u, v\rangle\bigr|&\leq\int_{\Omega_i} |J_\alpha(x)\nabla u\cdot\nabla v|
            +|J_\beta(x, w)uv|\mathrm{d}x\\
            &\leq \|J_\alpha\|_{L^\infty(\Omega_i)^{d\times d}}\|u\|_{V_i}\|v\|_{V_i}\\
            &\quad+\|J_\beta(\cdot, w)\|_{L^q(\Omega_i)}\|u\|_{L^{p*}(\Omega_i)}\|v\|_{L^{p*}(\Omega_i)}\\
            &\leq C\|u\|_{V_i}\|v\|_{V_i}+\|1+|w|^{p^*-2}\|_{L^q(\Omega_i)}\|u\|_{L^{p*}(\Omega_i)}\|v\|_{L^{p*}(\Omega_i)}\\
            &\leq C\|u\|_{V_i}\|v\|_{V_i}+C\bigl(1+\|w\|_{L^{p^*}(\Omega_i)}^{p^*-2}\bigr)\|u\|_{L^{p*}(\Omega_i)}\|v\|_{L^{p*}(\Omega_i)}.
        \end{align*}
        This follows by the same three term Hölder inequality as in the proof of~\cref{lemma:ai}. The boundedness then follows from the Sobolev embedding $V_i\hookrightarrow L^{p*}(\Omega_i)$. 
        
        Similarly, the Lipschitz continuity of $\hat{A}_i$ follows from~\cref{ass:eqjacobian} and the Sobolev embedding $V_i\hookrightarrow L^{p*}(\Omega_i)$, since
        \begin{align*}
            &\bigl|\langle\hat{A}_i(w)u-\hat{A}_i(\Tilde{w})u, v\rangle\bigr|\leq\int_{\Omega_i} \Bigl(|\bigl(J_\alpha(x)-J_\alpha(x)\bigr)\nabla u\cdot\nabla v|
            \\
            &\qquad+|\bigl(J_\beta(x, w)-J_\beta(x, \Tilde{w})\bigr)uv|\mathrm{d}x\Bigr)\\
            &\quad\leq \|J_\beta(\cdot, w)-J_\beta(\cdot, \Tilde{w})\|_{L^q(\Omega_i)}\|u\|_{L^{p*}(\Omega_i)}\|v\|_{L^{p*}(\Omega_i)}\\
            &\quad\leq \|\Tilde{L}(w,\Tilde{w})(w-\Tilde{w})\|_{L^q(\Omega_i)}\|u\|_{L^{p*}(\Omega_i)}\|v\|_{L^{p*}(\Omega_i)}\\
            &\quad\leq \|1+|w|^{p^*-3}+|\Tilde{w}|^{p^*-3}\|_{L^{q'}(\Omega_i)}\|w-\Tilde{w}\|_{L^{p^*}(\Omega_i)}\|u\|_{L^{p*}(\Omega_i)}\|v\|_{L^{p*}(\Omega_i)}\\
            &\quad\leq C(1+\|w\|_{L^{p^*}(\Omega_i)}^{p^*-3}+\|\Tilde{w}\|^{p^*-3}_{L^{p^*}(\Omega_i)})\|w-\Tilde{w}\|_{L^{p^*}(\Omega_i)}\|u\|_{L^{p^*}(\Omega_i)}\|v\|_{L^{p^*}(\Omega_i)}.
        \end{align*}
        Here, 
        \begin{displaymath}
            q'=\frac{p^*}{p^*-3}.
        \end{displaymath}
        Finally, the uniform monotonicity follows from the same property of $J_\alpha$ and $J_\beta$ in~\cref{ass:eqjacobian}, as
        \begin{align*}
            \langle\hat{A}_i(w)u, u\rangle&=\int_{\Omega_i} J_\alpha(x)\nabla u\cdot\nabla u
            +J_\beta(x, w)u^2\,\mathrm{d}x\\
            &\geq h_2(x)\int_{\Omega_i} |\nabla u|^2-h_3(x)|u|^2\mathrm{d}x\\
            &\geq\inf_{x\in\Omega} h_2(x)\int_{\Omega_i}|\nabla u|^2\mathrm{d}x-\sup_{x\in\Omega_i} h_3(x)\int_{\Omega_i}|u|^2 \mathrm{d}x\\
            &\geq\inf_{x\in\Omega_i} h_2(x)|u|_{V_i}^2-\sup_{x\in\Omega_i}h_3(x)\|u\|_{L^2(\Omega_i)}^2\\
            &\geq c\|u\|_{V_i}^2.
        \end{align*}
        
    We now identify
    \begin{displaymath}
        \langle P_i(\nu)\eta, \mu\rangle =\bigl\langle \hat{A}_i\bigl(F_i(\nu)\bigr)\hat{F}_i(\nu)\eta, R_i\mu\bigr\rangle =\bigl\langle \hat{A}_i\bigl(F_i(\nu)\bigr)\hat{F}_i(\nu)\eta, \hat{F}_i(\nu)\mu\bigr\rangle,
    \end{displaymath}
    compare with~\cref{rem:spind}. This then yields that $P_i(\nu)$ is bounded, locally Lipschitz continuous, uniformly monotone, and symmetric by arguing as in~\cref{thm:spop}.
    \end{proof}
    \begin{remark}
        The convergence result for~\cref{method:2} is somewhat weaker than the result for~\cref{method:1}, since we assume semilinearity of the equation and that the initial guess $\eta^0$ is close enough to the solution $\eta$. However, in numerical experiments the method performs better and we expect that these extra assumptions are unnecessary in practice.
    \end{remark}
    \section{Discrete nonlinear domain decomposition methods}\label{sec:discrete}
    Let $V_i^h\subset V_i$ and $\Lambda^h\subset\Lambda$ be finite dimensional subsets and let $V_i^{h, 0}=V_i^h\cap V_i^0$. Then we can define the discrete operators
    $A^h:V^h\rightarrow (V^h)^*$ and $A_i^h: V_i^h\rightarrow (V_i^h)^*$ by
    \begin{align*}
        \langle A^hu, v\rangle&=\int_{\Omega}\alpha(x, u, \nabla u)\cdot\nabla v+\beta(x, u, \nabla u) v \mathrm{d}x\qquad\text{and}\\
        \langle A_i^hu_i, v_i\rangle&=\int_{\Omega_i}\alpha(x, u_i, \nabla u_i)\cdot\nabla v_i+\beta(x, u_i, \nabla u_i) v_i \mathrm{d}x,
    \end{align*}
    respectively. We assume the following compatibility relation between the spaces $V_i^h, \Lambda^h$.
    \begin{assumption}\label{ass:discrete}
        The trace operators $T_i$ map $V^h_i$ into $\Lambda^h$ and have bounded linear right inverses $R_i^h: \Lambda^h\rightarrow V_i^h$. We assume that $R_i^h$ is bounded independently of $h$.
    \end{assumption}
    Typically we would have that $V_1^h, V_2^h, \Lambda^h$ share the same grid points on the interface. In this case choosing $R_i^h$ to be the discrete harmonic extension gives $h$-independent bounds under the assumption that $V_i^h$ are finite element spaces on a regular family of triangulations, see~\cite[Theorem 4.1.3]{quarteroni}.

    The weak discrete problem is then to find $u^h\in V^h$ such that
    \begin{equation}\label{eq:weakh}
        \langle A^hu^h, v\rangle=\langle f, v\rangle \quad\textrm{for all }v\in V^h.
    \end{equation}
    Moreover, the discrete transmission problem is to find $(u_1^h, u_2^h)\in V_1^h\times V_2^h$ such that
    \begin{equation}\label{eq:weaktranh}
    	\left\{\begin{aligned}
    	     \langle A_i^hu_i^h, v_i\rangle&=\langle f_i, v_i\rangle & & \text{for all } v_i\in V_i^{h, 0},\, i=1,2,\\
    	     T_1u_1&=T_2u_2, & &\\
    	     \textstyle\sum_{i=1}^2 \langle A_i^h& u_i^h, R_i^h\mu\rangle-\langle f_i, R_i^h\mu\rangle=0 & &\text{for all }\mu\in \Lambda. 
    	\end{aligned}\right.
    \end{equation}
    As for the continuous case, the weak problem and the transmission problem are equivalent with $u_i^h=\restr{u^h}{\Omega_i}$. Under~\cref{ass:discrete}, there exists discrete solution operators $F_i^h:\Lambda^h\rightarrow V_i^h$ that satisfy $T_iF_i^h\eta=\eta$ and
    \begin{displaymath}
        \langle A_i^hF_i^h\eta, v\rangle=\langle f_i, v\rangle \quad\textrm{for all }v\in V_i^{h, 0}.
    \end{displaymath}
    Thus, we can define discrete nonlinear Steklov--Poincaré operators $S_i:\Lambda^h\rightarrow (\Lambda^h)^*$ as
    \begin{displaymath}
        \langle S_i^h\eta, \mu\rangle =\langle A_i^hF_i^h\eta, R_i^h\mu\rangle-\langle f_i, R_i^h\mu\rangle.
    \end{displaymath}
    Defining $S^h=S_1^h+S_2^h:\Lambda^h\rightarrow (\Lambda^h)^*$ we can reformulate the discrete transmission problem as finding $\eta_h\in\Lambda^h$ such that
    \begin{equation}\label{eq:sph}
        S_i^h\eta_h=0\quad\text{in }(\Lambda^h)^*.
    \end{equation}
    
    \begin{remark}
        The discrete Steklov--Poincaré operators are independent of the choice of extension $R_i^h$, i.e., for any $\Tilde{R}_i^h:\Lambda^h\rightarrow V_i^h$ such that $T_i\Tilde{R}_i^h=I$ we have
        \begin{displaymath}
            \langle S_i^h\eta, \mu\rangle =\langle A_i^hF_i^h\eta, R_i^h\mu\rangle-\langle f_i, R_i^h\mu\rangle=\langle A_i^hF_i^h\eta, \Tilde{R}_i^h\mu\rangle-\langle f_i, \Tilde{R}_i^h\mu\rangle,
        \end{displaymath}
        compare with~\cref{rem:spind}. In particular, this means that, in implementation, we can take $R_i^h$ as the extension by zero to all interior grid points, since this is more efficient to compute.
    \end{remark}
    
    The discrete nonlinear Steklov--Poincaré operators have the following properties. They follow by the same argument as in~\cref{thm:spop}.
    \begin{theorem}
        Let~\cref{ass:domain,ass:eq,ass:discrete} hold. Then the discrete Steklov--Poincaré operators $S_i^h:\Lambda^h\rightarrow (\Lambda^h)^*$ satisfy the Lipschitz bound
        \begin{align*}
            \| S_i^h\eta-S_i^h\mu\|_{(\Lambda^h)^*}&\leq L\bigl(\|\eta\|_\Lambda, \|\mu\|_\Lambda\bigr)^2\|\eta-\mu\|_\Lambda,
        \end{align*}
        where $L$ has the following growth
        \begin{displaymath}
            L\bigl(\|u\|_{\Lambda},\|v\|_{\Lambda}\bigr)\leq C\bigl(1+\|u\|_{\Lambda}^{p^*-2}+\|v\|_{\Lambda}^{p^*-2}\bigr).
        \end{displaymath}
        Moreover, $S_i^h$ are uniformly monotone, i.e.,
        \begin{displaymath}
            \langle S_i^h\eta-S_i^h\mu, \eta-\mu\rangle\geq c\|\eta-\mu\|_\Lambda^2.
        \end{displaymath}
        The constants $c, C, L$ are independent on the choice of the spaces $V_i^h, \Lambda^h$. The same result holds for $S^h$. In particular, there exists a unique solution $\eta_h\in\Lambda^h$ of~\cref{eq:sph}.
    \end{theorem}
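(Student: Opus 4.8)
The plan is to reproduce the proof of~\cref{thm:spop} essentially line by line at the discrete level; the only point requiring genuine attention is to verify that every constant that appears is inherited from the equation and from~\cref{ass:discrete}, and is therefore independent of the chosen subspaces $V_i^h,\Lambda^h$. To that end I would first record the discrete analogues of~\cref{lemma:ai,lemma:Fi}. Since $V_i^h\subset V_i$, the operator $A_i^h$ is simply $A_i$ tested against elements of the subspace, so the local Lipschitz bound (with its growth control) and the uniform monotonicity of~\cref{lemma:ai} hold verbatim on $V_i^h$ with the same constants: these depend only on $h_1,h_2,h_3$, the Poincaré constant via~\cref{eq:monbound}, and the Sobolev embedding $V_i\hookrightarrow L^{p^*}(\Omega_i)$, all of which are properties of the equation and the fixed domains rather than of the discretization.

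Next I would establish the Lipschitz bound for the discrete solution operator $F_i^h$ by repeating the proof of~\cref{lemma:Fi} with $R_i^h$ in place of $R_i$. Setting $w_i^h=R_i^h(\eta-\mu)-(F_i^h\eta-F_i^h\mu)$, one has $w_i^h\in V_i^h$ and $T_iw_i^h=0$ using $T_iR_i^h=I$ and $T_iF_i^h\eta=\eta$, so $w_i^h\in V_i^{h,0}$ and the term $\langle A_i^hF_i^h\eta-A_i^hF_i^h\mu, w_i^h\rangle=\langle f_i-f_i, w_i^h\rangle$ vanishes by the defining equation for $F_i^h$. Since $\|R_i^h\|$ is bounded uniformly in $h$ by~\cref{ass:discrete}, the resulting Lipschitz constant for $F_i^h$, and in particular the bound $\|F_i^h\eta\|_{V_i}\leq C(1+\|\eta\|_\Lambda)$, is $h$-independent.

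With these two ingredients in hand, I would copy the two computations from the proof of~\cref{thm:spop}. For the Lipschitz estimate, write $\langle S_i^h\eta-S_i^h\mu,\lambda\rangle=\langle A_i^hF_i^h\eta-A_i^hF_i^h\mu, R_i^h\lambda\rangle$ and chain the discrete versions of~\cref{lemma:ai,lemma:Fi} together with $\|R_i^h\lambda\|_{V_i}\leq C\|\lambda\|_\Lambda$, which yields the stated bound with growth $L(\|\eta\|_\Lambda,\|\mu\|_\Lambda)\leq C(1+\|\eta\|_\Lambda^{p^*-2}+\|\mu\|_\Lambda^{p^*-2})$. For uniform monotonicity, take again $w_i^h=(R_i^h\eta-R_i^h\mu)-(F_i^h\eta-F_i^h\mu)\in V_i^{h,0}$, split $\langle S_i^h\eta-S_i^h\mu,\eta-\mu\rangle$ into a contribution over $F_i^h\eta-F_i^h\mu$ and one over $w_i^h$; the latter vanishes, and the former is bounded below by $c\|F_i^h\eta-F_i^h\mu\|_{V_i}^2\geq c\|T_iF_i^h\eta-T_iF_i^h\mu\|_\Lambda^2=c\|\eta-\mu\|_\Lambda^2$ using the boundedness of $T_i$. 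The identity $S^h=S_1^h+S_2^h$ transfers both properties to $S^h$.

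Finally, existence and uniqueness of the solution to~\cref{eq:sph} follows exactly as in~\cref{cor:spsol}: uniform monotonicity gives coercivity and strict monotonicity, the local Lipschitz bound gives continuity, and since $\Lambda^h$ is finite dimensional the Browder--Minty theorem (or the elementary finite-dimensional monotone-operator argument) shows that $S^h$ is a bijection. I expect no real obstacle in the argument itself; the sole delicate point — and the only reason~\cref{ass:discrete} is invoked — is the uniform-in-$h$ control of $\|R_i^h\|$, which is precisely what keeps all of the constants $c,C,L$ independent of the discretization.
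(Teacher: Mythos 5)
Your proposal is correct and matches the paper's approach exactly: the paper proves this theorem simply by stating that it ``follows by the same argument as in~\cref{thm:spop}'', and your write-up spells out precisely that argument, transferring \cref{lemma:ai,lemma:Fi} to the subspaces $V_i^h$, $\Lambda^h$, using $T_iR_i^h=I$ and the defining equation of $F_i^h$ on $V_i^{h,0}$ to kill the extension-dependent terms, and invoking the $h$-uniform bound on $R_i^h$ from \cref{ass:discrete} so that all constants remain independent of the discretization. Your concluding bijectivity argument via Browder--Minty mirrors \cref{cor:spsol}, which is also how the paper obtains the unique solvability of~\cref{eq:sph}.
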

    The Steklov--Poincaré interpretations of the discrete versions of~\cref{method:1,method:2} are
    \begin{align*}
        \eta_h^{n+1}=\eta_h^n-s (P^h)^{-1}S^h\eta_h^n\qquad\text{and}\\
        \eta_h^{n+1}=\eta_h^n-s (P^h(\eta_h^n))^{-1}S^h\eta_h^n,
    \end{align*}
    respectively. Here $P^h$ and $P^h(\nu)$ are the discrete versions of $P$ in~\cref{method:1,method:2}. The following convergence result follows directly from~\cref{thm:conv2} with the choice $(X, G, P)=(\Lambda^h, S^h, P^h)$.
    \begin{corollary}\label{cor:convh}
        Let~\cref{ass:domain,ass:eq,ass:discrete} hold and suppose that $s_1, s_2>0$ are small enough. Then the discrete version of~\cref{method:1} converges linearly in $V_1^h\times V_2^h$ to the solution of~\cref{eq:weaktranh}. The convergence factor $L$ is independent of the spaces $V_i^h, \Lambda^h$. If~\cref{ass:eqjacobian} also holds and the initial guess $\eta_h^0$ is close enough to $\eta_h$, the same result is true for~\cref{method:2}.
    \end{corollary}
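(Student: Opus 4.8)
The plan is to verify that the discrete data satisfy the hypotheses of the abstract convergence theorems and then invoke those theorems directly, taking $X=\Lambda^h$ (with the norm inherited from $\Lambda$), $G=S^h$, and $\chi=0$. For MNN1 I would appeal to \cref{thm:conv1} and for MNN2 to \cref{thm:conv2}. The properties required of $G=S^h$---uniform monotonicity together with a locally bounded Lipschitz constant---are exactly the content of the discrete analogue of \cref{thm:spop} stated immediately above the corollary, and crucially the constants $c,C,L$ there are asserted to be independent of the choice of $V_i^h,\Lambda^h$. Since $\chi=0$ and the unique solvability of $S^h\eta_h=0$ is already recorded, the only remaining task is to establish the preconditioner hypotheses with $h$-independent constants.

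For MNN1 the preconditioner is $P^h=(s_1(P_1^h)^{-1}+s_2(P_2^h)^{-1})^{-1}$, where $P_i^h$ is the discrete Steklov--Poincaré operator associated with the Laplacian on $\Omega_i$. I would first check that each $P_i^h$ is linear, bounded, symmetric, and uniformly monotone, with constants independent of $h$. Symmetry follows as in \cref{rem:spind} by writing $\langle P_i^h\eta,\mu\rangle=\langle \hat{A}_i\hat{F}_i^h\eta,\hat{F}_i^h\mu\rangle$, where $\hat{F}_i^h$ is the discrete harmonic extension; boundedness and coercivity with $h$-independent constants follow from the standard discrete harmonic extension estimates in~\cite[Chapter 4]{quarteroni}, which are available precisely because \cref{ass:discrete} furnishes right inverses $R_i^h$ bounded uniformly in $h$. \cref{lemma:Pcoer} then transfers all four properties to $P^h$, with constants depending only on $s_1,s_2$ and the uniform constants of $P_i^h$. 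Feeding these into \cref{thm:conv1} yields linear convergence $\eta_h^n\to\eta_h$ with a factor built solely from $h$-independent quantities, which is the claimed mesh independence.

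For MNN2 I would repeat the argument of \cref{cor:mnn2conv} verbatim in the discrete spaces. The linearized operators $\hat{A}_i^h(w)$ are defined by restricting the same bilinear form to $V_i^h$, so the boundedness, the $w$-Lipschitz bound, and the uniform monotonicity established there carry over unchanged---each estimate uses only the Hölder and three-term Hölder inequalities together with the Sobolev embedding $V_i\hookrightarrow L^{p^*}(\Omega_i)$, all of which restrict to the subspaces $V_i^h\subset V_i$ with the same constants. Setting $\langle P_i^h(\nu)\eta,\mu\rangle=\langle \hat{A}_i^h(F_i^h\nu)\hat{F}_i^h(\nu)\eta,\hat{F}_i^h(\nu)\mu\rangle$ and using the $h$-uniform bounds on $R_i^h$ gives the three hypotheses on $P^h(\nu)$ in \cref{thm:conv2}---the uniform upper bound, coercivity, and Lipschitz dependence on $\nu$---with constants independent of $V_i^h,\Lambda^h$. \cref{thm:conv2} then delivers local linear convergence provided $s_1,s_2$ are small and $\eta_h^0$ is close to $\eta_h$.

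Finally, in both cases the interface estimate $\|\eta_h^n-\eta_h\|_\Lambda\le CL^n\|\eta_h^0-\eta_h\|_\Lambda$ is promoted to convergence of $(u_1^{n+1},u_2^{n+1})=(F_1^h\eta_h^n,F_2^h\eta_h^n)$ in $V_1^h\times V_2^h$ by the local Lipschitz continuity of the discrete solution operators $F_i^h$, the discrete analogue of \cref{lemma:Fi}, whose constants are again $h$-independent. The main obstacle, and the only place where real care is needed, is precisely this bookkeeping of $h$-independence: one must confirm that \emph{every} constant entering $L$---the monotonicity and Lipschitz constants of $S^h$, the four structural constants of $P^h$ (or the three of $P^h(\nu)$), and the constants in the norm equivalence between $\|\cdot\|_X$ and $\|\cdot\|_{P^h}$---ultimately depends only on the $h$-uniform bound on $R_i^h$ granted by \cref{ass:discrete} and on the discrete harmonic extension constant, and never on the dimension of $\Lambda^h$.
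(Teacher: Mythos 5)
Your proposal is correct and follows essentially the same route as the paper, which simply invokes the abstract convergence theorems with the choice $(X,G,P)=(\Lambda^h,S^h,P^h)$ together with the discrete Steklov--Poincaré properties stated just before the corollary. In fact you supply more detail than the paper does—explicitly verifying the $h$-independent preconditioner hypotheses via \cref{lemma:Pcoer} and the discrete harmonic extension bounds, and promoting interface convergence to $V_1^h\times V_2^h$ via the discrete analogue of \cref{lemma:Fi}—all of which the paper leaves implicit.
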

    
    \section{Numerical results}\label{sec:numexp}
    \begin{figure}
        \centering
        \includegraphics[width=0.4\linewidth]{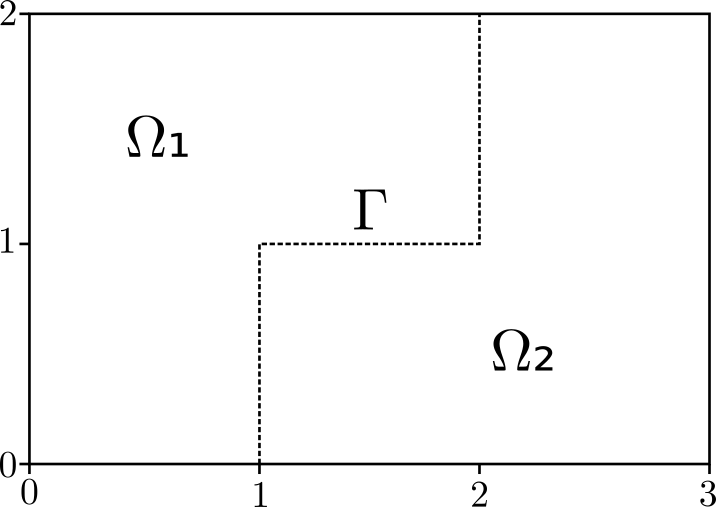}\qquad
        \includegraphics[width=0.4\linewidth]{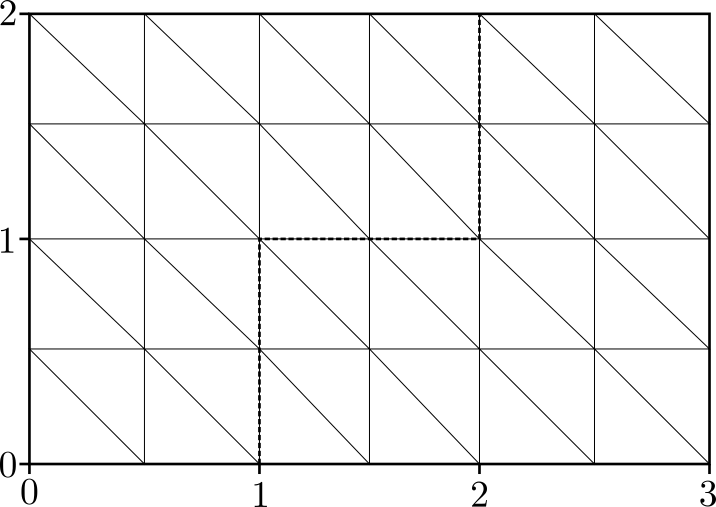}
        \caption{\emph{The domain decomposition used for all numerical results (left) and the computational mesh with $h=0.5$ (right).}}
        \label{fig:discretedomain}
    \end{figure}%
    For our numerical results, we consider the domain
    \begin{align*}
        \Omega&=[0, 3]\times [0, 2]\subset\R^2
    \end{align*}
    and the decomposition shown in~\cref{fig:discretedomain}. The domain is discretized using a finite element method with linear elements and the mesh width $h=1/256$. The grid is aligned with the interface $\Gamma$, so that~\cref{ass:discrete} is fulfilled. We first consider the semilinear equation as in~\cref{ex:semi}, which leads to Steklov--Poincaré operators with the properties of~\cref{thm:spop} and thus linearly convergent methods as in~\cref{cor:mnn1conv,cor:mnn2conv,cor:convh}. The source term is chosen as
    \begin{displaymath}
        f(x, y)=xy(3-x)(2-y)
    \end{displaymath}
    and the initial guess is $\eta^0=0$. At each iteration we compute the error
    \begin{displaymath}
        e=\frac{\|u_1^{n, h}-u_1^h\|_{V_1}+\|u_2^{n, h}-u_2^h\|_{V_2}}{\|u_1^h\|_{V_1}+\|u_2^h\|_{V_2}}.
    \end{displaymath}
    The error at each iteration is plotted in~\cref{fig:prelreslip}. Here, $u_i^h$ denotes the solution to the discrete problem~\cref{eq:weakh} restricted to $\Omega_i$ and $u_i^{n, h}$ denotes the $n$th iteration of the respective method. The method parameters are $s_1=s_2=0.2$ for the Neumann--Neumann method, $s_1=s_2=0.19$ for~\cref{method:1}, and $s_1=s_2=0.21$ for~\cref{method:2}. The method parameters are chosen to be near optimal. Since the modified methods are cheaper to compute than the Neumann--Neumann method we also plot the error compared to the total amount of linear systems solved to achieve this error. This can also be found in~\cref{fig:prelreslip}. From the graphs we see that the convergence rates are similar for the three methods, but the modified methods are more efficient, since they require less linear solves to achieve the same error.

We perform the same experiment with the same parameters on~\cref{ex:quasi} and plot the results in~\cref{fig:quasilip}. Now we see that the convergence rate for~\cref{method:2} is better than for~\cref{method:1} and in terms of linear solves, both methods require less linear solves to achieve similar error to the Neumann--Neumann method. Note that since~\cref{ex:quasi} satisfies~\cref{ass:eq}, but not~\cref{ass:eqjacobian} we have only proven convergence for~\cref{method:1}.
    
    Finally, the experiment is applied to the $p$-Laplace equation~\cref{eq:3lap} with $p=3$, although the equation does not satisfy~\cref{ass:eq}. The domain, mesh and source term is the same as before. The method parameters are $s_1=s_2=0.2$ for the Neumann--Neumann method, $s_1=s_2=0.15$ for~\cref{method:1}, and $s_1=s_2=0.2$ for~\cref{method:2}. The error is plotted in~\cref{fig:quasi}. We see that the Neumann--Neumann method does not converge while~\cref{method:1,method:2} converges linearly, with the latter converging faster, especially when the error is small.
    
\begin{figure}
    \centering
    \includegraphics[width=0.49\linewidth]{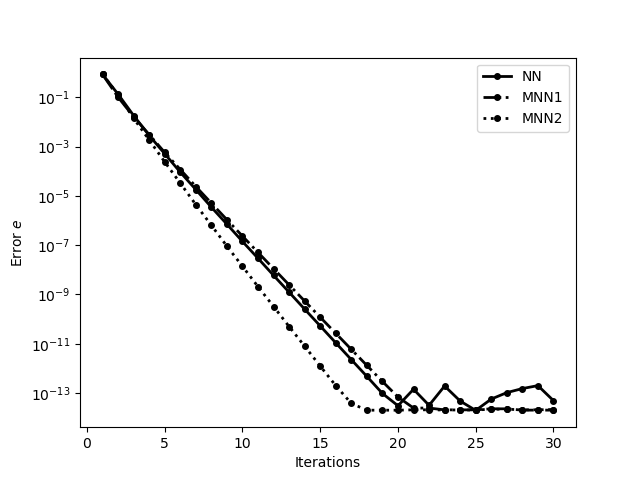}
    \includegraphics[width=0.49\linewidth]{nlelips020s019s021res8solves.png}
    \caption{The error compared to the amount of iterations (left) and the amount of linear solves (right) of the Neumann--Neumann (NN) and the modified Neumann--Neumann methods (MNN1, MNN2) applied to~\cref{ex:semi}.}
    \label{fig:prelreslip}
    \end{figure}%
\begin{figure}
    \centering
    \includegraphics[width=0.49\linewidth]{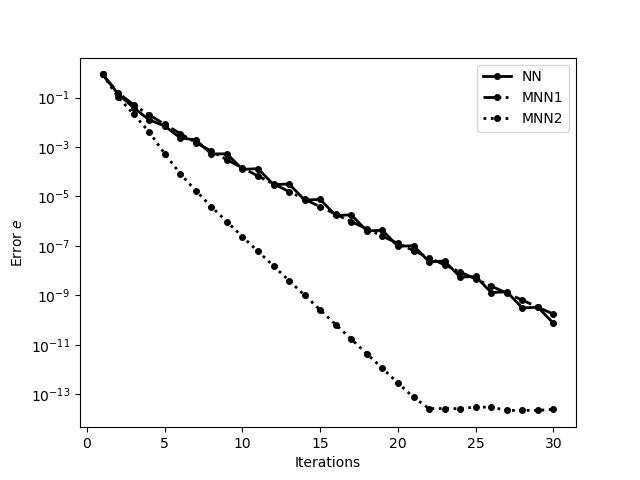}
    \includegraphics[width=0.49\linewidth]{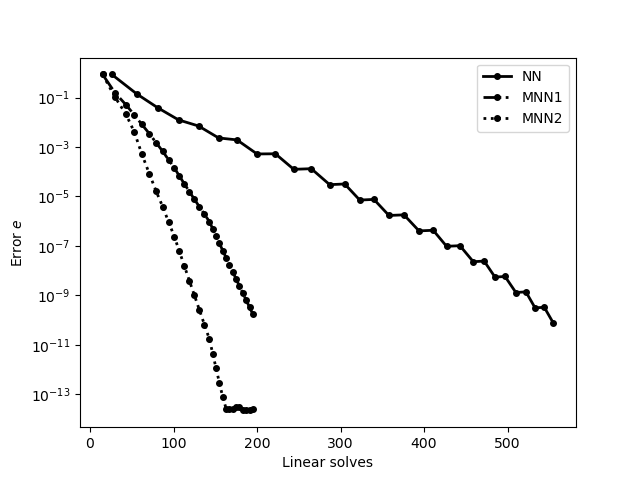}
    \caption{The error compared to the amount of iterations (left) and the amount of linear solves (right) of the Neumann--Neumann (NN) and the modified Neumann--Neumann methods (MNN1, MNN2) applied to~\cref{ex:quasi}.}
    \label{fig:quasilip}
\end{figure}%
\begin{figure}
    \centering
    \includegraphics[width=0.49\linewidth]{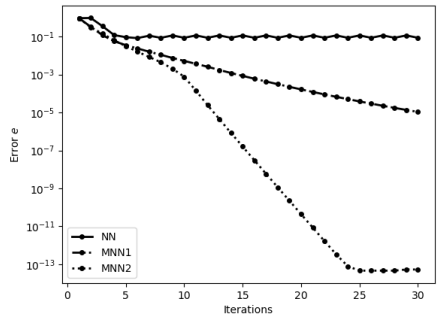}
    \caption{The error compared to the amount of iterations of the Neumann--Neumann (NN) and the modified Neumann--Neumann methods (MNN1, MNN2) applied to~\cref{eq:3lap}.}
    \label{fig:quasi}
\end{figure}%

\bibliographystyle{plain}
\bibliography{ref}
\end{document}